\newcommand{\hyref}[2]{\hyperref[#2]{#1~\ref*{#2}}}
\newcommand{\hyrefprop}[2]{\hyperref[#2]{#1\ref*{#2}}}
\theoremstyle{plain}
\newtheorem{theorem}{Theorem}[section]
\newtheorem{lemma}[theorem]{Lemma}
\newtheorem{corollary}[theorem]{Corollary}
\newtheorem{proposition}[theorem]{Proposition}
\theoremstyle{definition}
\newtheorem{remark}[theorem]{Remark}
\newtheorem{example}[theorem]{Example}
\newtheorem{examples}[theorem]{Examples}
\newtheorem*{naive-algorithm}{Na\"ive algorithm}
\newtheorem*{refined-algorithm}{Refined algorithm}
\newtheorem{definition}[theorem]{Definition}
\newcommand{\defn}[1]{\emph{#1}}
\newcommand{\ie}{i.e.\ }
\newcommand{\etc}{etc.\ }
\newcommand{\N}{\mathbb{N}}
\newcommand{\Z}{\mathbb{Z}}
\newcommand{\Q}{\mathbb{Q}}
\newcommand{\R}{\mathbb{R}}
\newcommand{\C}{\mathbb{C}}
\newcommand{\PP}{\mathbb{P}}
\newcommand{\cat}[1]{\mathsf{#1}}
\newcommand{\mor}[2]{{\mathrm{Hom}}(#1,#2)}
\newcommand{\ext}[3]{{\mathrm{Ext}^{#1}}(#2,#3)}
\newcommand{\stab}[1]{\mathrm{Stab}(#1)}
\newcommand{\astab}[2]{\mathrm{Stab}_{#1}(#2)}
\newcommand{\astabo}[2]{\mathrm{Stab}^\circ_{#1}(#2)}
\newcommand{\Endo}[1]{\mathrm{End}(#1)}
\newcommand{\aut}[2]{\mathrm{Aut}_{#1}(#2)}
\newcommand{\auto}[2]{\mathrm{Aut}_{#1}^\mathrm{o}(#2)}
\DeclareMathOperator{\Aut}{\mathsf{Aut}}
\DeclareMathOperator{\id}{{\mathsf{id}}}
\newcommand{\spclos}[1]{\mathsf{split}\left(#1\right)}
\newcommand{\thick}[2]{\mathsf{thick}_{#1}(#2)}
\renewcommand{\phi}{\varphi}
\renewcommand{\epsilon}{\varepsilon}
\tikzstyle{matrix of math nodes}=[
\title{Mass-Growth of Triangulated auto-equivalences}
\author{Jon Woolf}
\keywords{Triangulated category, auto-equivalence, stability condition, mass growth, entropy}
\subjclass{18G80, 37A99}
\begin{document}
\begin{abstract}
We relate the mass growth (with respect to a stability condition) of an exact auto-equivalence of a triangulated category to the dynamical behaviour of its action on the space of stability conditions. One consequence is that this action is free and proper whenever the mass growth is non-vanishing.
\end{abstract}

\maketitle

\setcounter{tocdepth}{1}
\tableofcontents

\section{Introduction}

The seminal paper \cite{dhkk} initiated the dynamical study of an exact endofunctor  of a triangulated category, making a number of striking parallels with classical dynamics. These were inspired in part by the analogies between Teichm\"uller theory and Bridgeland stability conditions encapsulated in Figure \ref{analogy}. Elements of this analogy had been noted by  Kontsevich and Soibelman, and independently Seidel, before it was developed more fully by Gaiotto, Moore and Neitzke in \cite{MR3003931}. Aspects of it have been made precise by Bridgeland and Smith \cite{MR3349833} who relate the stability spaces of certain $3$-Calabi--Yau categories constructed from ideal triangulations of a surface to spaces of quadratic differentials, and separately by Haiden, Katzarkov and Kontsevich \cite{MR3735868} who identify the stability space of a certain Fukaya category of a surface with the space of marked flat structures. Beyond these precise relationships, however,  the analogy remains a useful heuristic guide.

We consider the dynamical behaviour of an exact auto-equivalence $\alpha$ of a triangulated category $\cat{D}$ acting  as an isometry of the Bridgeland metric on  the stability space $\astab{\Lambda}{\cat{D}}$. Following the analogy this should be akin to studying the action of a surface diffeomorphism  on Teichm\"uller space. There, the  mapping class group acts by isometries of the Teichm\"uller metric. This action is properly discontinuous and extends to the Thurston compactification. Analysing the fixed points of this extended action led Thurston to his classification of elements of the mapping class group as either periodic, pseudo-Anosov or reducible  \cite{MR956596}. Although we do not currently have an analogous compactification of the stability space in general --- but see \cite{bapat2020thurston} for a proposed construction ---  this suggests that the dynamics of the action on stability space should be useful for classifying automorphisms of $\cat{D}$. As a baby step in this direction we show that the infinite cyclic group generated by $\alpha$ acts freely and properly on a component of $\astab{\Lambda}{\cat{D}}$ whenever $\alpha$ has non-zero mass-growth. Similar results were obtained in the particular case of (DHKK) pseudo-Anosov functors by Kikuta \cite[\S 4]{kikuta2020curvature}.

\begin{figure}
\begin{tabular}{c|c}
Surfaces & Triangulated categories\\
\hline
\hline
Closed curve $C$ & Object $E$\\
\hline
Intersection $C \cap C'$ & Morphisms $\ext{*}{E}{E'}$\\
\hline
Flat metric & Stability condition\\
\hline
Geodesics & Stable objects\\
\hline
Length of $C$ & Mass of $E$\\
\hline
Slope of $C$ & Phase of $E$\\
\hline
Diffeomorphism & Auto-equivalence\\
\hline
Teichm\"uller space & Stability space\\
\hline
\end{tabular}
\caption{Analogies between smooth surfaces and triangulated categories.}
\label{analogy}
\end{figure}

We now explain in more detail. The \defn{entropy} $h_t(\alpha)\in \R$ of an exact endofunctor $\alpha$ of $\cat{D}$ was defined in \cite{dhkk} by analogy with the notions of entropy in dynamics.  It measures the complexity of $\alpha$. The real parameter $t$ reflects the fact that the $\Z$-grading of a triangulated category allows one to define an invariant with extra structure. The authors of \cite{dhkk} conjectured that the entropy should be related to the way the masses of objects, measured in a fixed stability condition $\sigma$ on $\cat{D}$, grow as we successively apply $\alpha$. A relationship of this kind was established by Ikeda in \cite{ikeda_2021} when the \defn{mass growth with parameter}
\[
h_{\sigma,t}(\alpha) = \sup_{E\in \cat{D}} \limsup_{n\to \infty} \frac{1}{n}\log m_{\sigma,t}(\alpha^n E) \qquad (t\in \R)
\]
of $\alpha$  was introduced. Here $m_{\sigma,t}(E) = \sum_{i=1} ^nm_\sigma(A_i)e^{\phi_i t}$ is a parameterised version of the mass of the object $E$ which takes account not only of the masses of its Harder--Narasimhan factors $A_i$ but also of their phases $\phi_i$. Ikeda shows that $h_{\sigma,t}(\alpha)$ depends only on the component of the stability space $\astab{\Lambda}{\cat{D}}$ in which $\sigma$ lies, and that  $h_t(\alpha) \geq h_{\sigma,t}(\alpha)$ with equality when $\sigma$ is in a component  containing an algebraic stability condition, \ie one whose heart is an abelian length category with finitely many isomorphism classes of simple objects.

When $\alpha$ is an auto-equivalence its action on $\astab{\Lambda}{\cat{D}}$ is controlled by the mass-growth  because $m_{\sigma,t}(\alpha^n E) =  m_{\alpha^{-n} \sigma,t}(E)$, allowing us to view $h_{\sigma,t}(\alpha)$ as a measure of how masses and phases grow as we move (backwards) along the orbit of $\alpha$ in the stability space. We show in Proposition \ref{metric bounds} that
\begin{equation}
\label{eventual displacement bound}
\lim_{n\to \infty} \frac{ d(\sigma, \alpha^n\sigma) }{ n} \geq \max\left\{ h_{\sigma,0}(\alpha) , \left| \lim_{t\to \pm \infty} \frac{ h_{\sigma,t}(\alpha) }{t}  \right|\right\}
\end{equation}
with equality when there is an algebraic stability condition in the component $\astabo{\Lambda}{\cat{D}}$ of $\sigma$. Here $d$ is the Bridgeland metric on $\astab{\Lambda}{\cat{D}}$. We refer to the term on the left of (\ref{eventual displacement bound}) as the \defn{eventual displacement} of $\alpha$. It is bounded above by the \defn{translation length} $\inf_{\sigma\in \astabo{\Lambda}{\cat{D}}} d(\sigma,\alpha\sigma)$ and depends only on the component $\astabo{\Lambda}{\cat{D}}$. The action of the infinite cyclic group generated by $\alpha$ is free and proper when the eventual displacement is strictly positive. The piecewise-linear bounds for mass growth in Proposition \ref{basic bounds} together with (\ref{eventual displacement bound}) show this occurs when $h_{\sigma,t}(\alpha)\neq 0$ for some $t\in \R$. Unfortunately, there is no simple way to compute the mass growth of a composite $\alpha \beta$ from those of $\alpha$ and $\beta$, so it  is not enough to consider a set of generators  to check whether the action of $\Aut(\cat{D})$, or some subgroup thereof, on $\astab{\Lambda}{\cat{D}}$ is free and proper.

The space $\astab{\Lambda}{\cat{D}}$ has a natural right $\C$ action which commutes with the left $\Aut(\cat{D})$ action. We also consider the induced action of $\alpha$ on the quotient $\astab{\Lambda}{\cat{D}}/\C$. This is an isometry of the induced metric  and we obtain similar inequalities. In this case the eventual displacement is bounded below by 
\[
\frac{1}{2} \left( h_{\sigma,0}(\alpha) + h_{\sigma,0}(\alpha^{-1}) \right)
\]
 in general (Lemma \ref{quotient mass bound}), and by
\[
 \frac{1}{2}\max\left\{ h_{\sigma,0}(\alpha) + h_{\sigma,0}(\alpha^{-1}) , \lim_{t\to  \infty} \frac{h_{\sigma,t}(\alpha) }{ t }-\lim_{t\to - \infty} \frac{ h_{\sigma,t}(\alpha) }{ t} \right\}
\]
when there is an algebraic stability condition in the component of $\sigma$ (Proposition~\ref{quotient eventual displacement}). This yields a  criterion for when the action on the quotient is free and proper.  The bound is sharp for (DHKK) pseudo-Anosov auto-equivalences \cite[Theorem 4.9]{kikuta2020curvature}.

We obtain these results by making direct estimates of the eventual displacement, and illustrate them by well-known examples including  pseudo-Anosov functors, auto-equivalences of semisimple categories and spherical twists. Section \ref{background} is a review of the basic definitions and results on entropy and mass growth. Section \ref{linear bounds} reviews the piecewise-linear bounds for mass growth \cite[Theorem 2.2.6]{https://doi.org/10.48550/arxiv.2008.06159}. These imply that both limits $\lim_{t\to\pm \infty} h_{\sigma,t}(\alpha) / t$ are well-defined, and can be expressed in terms of the phase growth of semistable objects or of a split generator. They also lead to criteria for when the mass growth $h_{\sigma,t}(\alpha)$ is piecewise-linear or linear in the parameter $t$. Section \ref{auto-equivalences} contains the results relating mass growth to eventual displacement.

\thanks{I would like to thank Yu-Wei Fan for explaining the connections with his, and his co-authors', work on shifting numbers \cite{https://doi.org/10.48550/arxiv.2008.06159} and suggesting that the results in \S4 should also apply to pseudo-Anosov auto-equivalences in the sense of \cite{Fan2019OnPA}. }

\section{Entropy and Mass Growth}
\label{background}
We recall the definitions of entropy, and of stability condition and mass growth, and review their key properties and inter-relationship.

\subsection{Entropy}
For an object $D\in \cat{D}$ let $\thick{}{D}$ denote the minimal thick subcategory of $\cat{D}$ containing $D$. Thus $E\in \thick{}{D}$ if there is a finite sequence of exact triangles
\begin{equation}
\label{complexity diagram}
\begin{tikzcd}
0 \ar{r} & E_1 \ar{r}\ar{d} & \cdots \ar{r} & E_{n-1} \ar{r} & E_n=E\oplus E' \ar{d}\\
& D[d_1]\ar[dashed]{ul} &&& D[d_n]\ar[dashed]{ul} &
\end{tikzcd}
\end{equation}
for some $E'\in \cat{D}$ and $d_1,\ldots, d_n\in \Z$. When $E\in \thick{}{D}$ the \defn{complexity of $E$ relative to $D$} is defined to be
\[
\delta_t(D,E) = \inf \left\{ \sum_{i=1}^n e^{d_it} \colon \ \text{diagrams (\ref{complexity diagram})} \right\};
\]
otherwise $\delta_t(D,E)=\infty$. By convention the empty sum is zero so that $\delta_t(D,0)=0$. The complexity is a well-behaved quantity, in particular:
\begin{lemma}[{\cite[Proposition 2.2]{dhkk} and \cite[Lemma 2.3]{ikeda_2021}}]
\label{complexity inequalities}
For objects $C$, $D$, $E$ and $F$ of $\cat{D}$
\begin{enumerate}
\item $\delta_t(D,F) \leq \delta_t(D,E) \delta_t(E,F) $,
\item $\delta_t(C,E) \leq \delta_t(C,D)+ \delta_t(C,F) $ whenever there is an exact triangle 
\[
D \to E \to F \to E[1],
\]
\item $\delta_t(\alpha D, \alpha E) \leq \delta_t(D,E)$ for any exact endofunctor $\alpha \colon \cat{D} \to \cat{D}$.
\end{enumerate}
(Here and in the sequel, we suppress the brackets in  $\alpha D$ \etc  to aid readability.) 
\end{lemma}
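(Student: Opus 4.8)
The plan is to prove the three parts in the order (3), (2), (1), since the submultiplicativity in (1) can be bootstrapped from the subadditivity in (2) once two elementary observations are in hand.

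Part (3) should be immediate from the definition. Since $\alpha$ is exact it is additive, commutes with the shift up to natural isomorphism, and carries exact triangles to exact triangles. Hence applying $\alpha$ to any witnessing diagram (\ref{complexity diagram}) for $E\in\thick{}{D}$ produces a diagram of the same shape with $E\oplus E'$ replaced by $\alpha E\oplus \alpha E'$ and each brick $D[d_i]$ replaced by $(\alpha D)[d_i]$. The exponents $d_i$, and therefore the cost $\sum_i e^{d_it}$, are unchanged, so every admissible cost for $\delta_t(D,E)$ is also admissible for $\delta_t(\alpha D,\alpha E)$; taking the infimum gives the inequality.

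For part (2) I would use the standard \emph{filtration extension along a triangle}: given an exact triangle $A\to B\to C\to A[1]$ and a tower realising $C$ with prescribed subquotients, the octahedral axiom produces, by induction on the length of the tower, a tower from $A$ to $B$ with the same subquotients. Applying this first requires absorbing the auxiliary summands. Choose near-optimal witnessing diagrams for $\delta_t(C,D)$ and $\delta_t(C,F)$, realising $D\oplus D'$ and $F\oplus F'$ from shifts of $C$ at costs approaching $\delta_t(C,D)$ and $\delta_t(C,F)$. Taking the direct sum of the given triangle $D\to E\to F\to D[1]$ with the split triangles $D'\xrightarrow{\id} D'\to 0$ and $0\to F'\xrightarrow{\id} F'$ yields an exact triangle $D\oplus D'\to E\oplus D'\oplus F'\to F\oplus F'\to (D\oplus D')[1]$. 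Splicing the $C$-tower of $F\oplus F'$ on top of the $C$-tower of $D\oplus D'$ via the extension lemma exhibits $E\oplus D'\oplus F'$ as built from shifts of $C$ at total cost the sum of the two costs. Thus $E\in\thick{}{C}$, and letting the costs tend to their infima gives (2).

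Finally, (1) follows from (2) together with two observations. First, shifting a witnessing diagram for $E$ by an integer $e$ shows $\delta_t(D,E[e])\leq e^{et}\,\delta_t(D,E)$. Second, a witnessing diagram for $F\oplus F'$ is in particular one for $F$ (absorbing $F'$ into the ambient summand), so $\delta_t(D,F)\leq \delta_t(D,F\oplus F')$. Now take a near-optimal tower $0=G_0\to G_1\to\cdots\to G_m=F\oplus F'$ realising $F\in\thick{}{E}$ with subquotients $E[e_j]$ and cost $\sum_j e^{e_jt}$ approaching $\delta_t(E,F)$. Iterating (2) along the triangles $G_{k-1}\to G_k\to E[e_k]\to G_{k-1}[1]$ (with $C=D$) gives $\delta_t(D,F\oplus F')\leq \sum_j \delta_t(D,E[e_j])\leq \bigl(\sum_j e^{e_jt}\bigr)\,\delta_t(D,E)$, and combining with the summand estimate while letting $\sum_j e^{e_jt}\to\delta_t(E,F)$ yields $\delta_t(D,F)\leq \delta_t(E,F)\,\delta_t(D,E)$. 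The main obstacle is the bookkeeping in part (2): one must check that the extension step genuinely composes the two towers into a single tower of exact triangles with controlled cost, and that the auxiliary summands $D'$ and $F'$ can be introduced without inflating the exponents. This is precisely where the octahedral axiom enters, and it is the only place where the argument is more than formal manipulation of the definition.
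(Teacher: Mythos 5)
The paper does not actually prove this lemma: it is quoted directly from \cite[Proposition 2.2]{dhkk} and \cite[Lemma 2.3]{ikeda_2021}, so there is no in-paper argument to compare against. Your proof is correct, and it fills in exactly the content that those references treat briefly. Part (3) is, as you say, immediate from exactness; part (2) is the standard octahedral-axiom induction showing that a tower realising the third term of a triangle lifts to a tower interpolating between the other two terms, and your direct-sum-with-split-triangles device is the right way to absorb the auxiliary summands $D'$ and $F'$ without changing any exponents; part (1) then follows by iterating (2) along a near-optimal tower together with the shift estimate $\delta_t(D,E[e])\le e^{et}\,\delta_t(D,E)$ and the summand estimate $\delta_t(D,F)\le\delta_t(D,F\oplus F')$. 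This bootstrapping of submultiplicativity from subadditivity is a mild (and arguably cleaner) reorganisation of the usual argument, which substitutes the $D$-tower for $E$ into each step of the $E$-tower for $F$ and faces the same summand bookkeeping there; both routes rest on the same octahedral extension lemma, so nothing essentially new is gained or lost. The only point left implicit, as it also is in the cited sources, is the degenerate bookkeeping when some $\delta_t$ is infinite (or when a tower is empty), where the inequalities hold with the standard conventions $\delta_t(D,0)=0$ and $\delta_t(D,E)=\infty$ for $E\notin\thick{}{D}$.
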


Let $\Endo{\cat{D}}$ be the ring of exact endofunctors of $\cat{D}$ and assume that $\cat{D}$ has a \defn{split-generator}, \ie an object $G$ such that $\thick{}{G} =\cat{D}$. 
\begin{definition}[{\cite[Definition 2.4]{dhkk}}]
The \defn{entropy} of  $\alpha\in \Endo{\cat{D}}$ is defined to be
\[
h_t(\alpha) = \lim_{n\to \infty} \left( \frac{1}{n}\log \delta_t(G, \alpha^nG)\right).
\]
The limit exists and is independent of the choice of generator by \cite[Lemma 2.5]{dhkk} and the entropy is a convex real-valued function of $t$ by \cite[Theorem 2.1.6]{https://doi.org/10.48550/arxiv.2008.06159}.
\end{definition}
By \cite[\S2]{dhkk}  $h_t(\alpha[d]) = h_t(\alpha)+dt$ and $h_t(\alpha^k) = kh_t(\alpha)$ for any $k\in\N_{>0}$. If $\cat{D}$ is saturated and admits a Serre functor then $h_t(\alpha)=h_t(\alpha^{-1})$ for any autoequivalence $\alpha$ \cite[Lemma  2.11]{Fan_2021}.

The entropy is conjugation-invariant:  $h_t(\alpha^{-1}\beta\alpha) = h_t(\beta)$ for an exact auto-equivalence $\alpha$ and endofunctor $\beta$ by \cite[\S2]{dhkk}. Indeed $h_t(\alpha\beta)=h_t(\beta\alpha)$ for any endomorphisms $\alpha$ and $\beta$ for which $\alpha^nG, \beta^nG\neq 0$ for $n\in \N$ \cite[Lemma 2.8]{MR3600071}. When $\alpha$ and $\beta$ commute \cite[\S2]{dhkk} gives a bound
\[
h_t(\alpha\beta)  \leq h_t(\alpha) + h_t(\beta)
\]
with equality when $\cat{D}$ is saturated with a Serre functor and $\alpha$ is an autoequivalence with $h_t(\alpha)$  odd  \cite[Lemma 2.12]{Fan_2021}. However, if $\alpha$ and $\beta$ do not commute there is no obvious relation between the entropies of $\alpha$, $\beta$ and $\alpha\beta$. 

In many contexts the entropy $h_0(\alpha)$ at $t=0$ is bounded below by the logarithm of the spectral radius of the linear endomorphism induced by $\alpha$ on Hochschild homology or on the numerical Grothendieck group --- see for example \cite[Theorem 2.8]{dhkk} and \cite[Theorem 2.13]{Kikuta2018ANO}. Often, this lower bound is sharp for auto-equivalences, but this is not always the case --- see \cite[Proposition 1.6]{Ouchi2019OnEO} for counterexamples involving K3 surfaces.

Finally \cite[Theorem 2.1.7]{https://doi.org/10.48550/arxiv.2008.06159}, see also \cite[Proposition 6.13]{Elagin2019ThreeNO}, establishes piecewise-linear upper and lower bounds 
\begin{align}
\label{PL entropy lower bound}
\tau^-(\alpha)t  \leq h_t(\alpha) \leq h_0(\alpha) + \tau^-(\alpha)t & \qquad \text{for}\ t\leq 0\ \text{and}\\ 
\label{PL entropy upper bound}
\tau^+(\alpha)t  \leq h_t(\alpha) \leq h_0(\alpha) + \tau^+(\alpha)t & \qquad \text{for}\ t\geq 0
\end{align}
for the entropy where the limits $\tau^\pm(\alpha) = \lim_{t\to \pm\infty} h_t(\alpha)/t$ exist in $\R$. These limits are known respectively as the lower and upper shifting numbers of $\alpha$. In particular note that the entropy is piecewise-linear when $h_0(\alpha)=0$.

\begin{examples}
\begin{enumerate}
\item \label{fractional CY} Suppose $\cat{D}$ is fractional Calabi-Yau of dimension $m/n \in \Q$, \ie that $\cat{D}$ has a Serre functor $S \colon \cat{D} \to \cat{D}$ such that $S^n \cong [m]$. Then by \cite[\S 2.6]{dhkk} $h_t(S) = mt/n$.
\item Let $X$ be a smooth projective variety over a field $k$, and 
\[
S = - \otimes \omega_X[\dim (X)] \colon \cat{D}^b(X) \to \cat{D}^b(X)
\]
be the Serre functor. Then by \cite[Proposition 2.12]{dhkk} $h_t(S) = \dim(X)\, t$.
\item Let  $Q$ be a quiver, and $S \colon \cat{D}^b(Q)\to \cat{D}^b(Q)$ the Serre functor on the bounded derived category of finite-dimensional representations of $Q$. When $Q$ is Dynkin $\cat{D}^b(Q)$ is fractional Calabi--Yau \cite[Theorem 3.8]{miyachi-yekutieli} and so $h_t(S)$ is linear by (\ref{fractional CY}) above. When $Q$ is not of Dynkin type
\[
h_t(S) = \log \rho\left( [S] \right) +t
\]
where  the spectral radius $\rho([S])$ of the induced endomorphism $[S]$ of the Grothendieck group satisfies $\rho([S])\geq 1$ with equality if and only if $Q$ is extended Dynkin \cite[Theorem 2.16]{dhkk}.

\item Let $f \colon X \to X$ be a regular self-map of a smooth complex projective variety $X$, and assume that the odd and even components of the induced map $H^*(f;\Q)$ on rational cohomology have different eigenvalues (with multiplicity) on the spectral radius $\rho\left( H^*(f,\Q) \right)$. Then 
\[
h_t(f^*) = \log \rho\left( H^*(f,\Q) \right)
\]
is constant \cite[Theorem 2.11]{dhkk}. If, in addition, $f$ is surjective then 
\[
h_t(f^*) = \log \rho \left( [f^*] \right) = h_\text{top}(f)
\]
 by \cite[Theorem 5.5]{10.1093/imrn/rnx131} where the latter is the \defn{topological entropy} of the map $f$, and $[f^*]$ denotes the induced endomorphism of the numerical Grothendieck group.
\item Let $\cat{D}$ be the perfect derived category of a smooth proper dg-algebra. Let $\Phi_S$ be the spherical twist about an $N$-spherical object $S\in \cat{D}$. For $t\leq 0$, we have $h_t(\Phi_S) = (1-N)t$, and if there is $0\neq E\in \cat{D}$ with $\ext{*}{E}{S} = 0$  then in addition $h_t(\Phi_S)=0$ for $t>0$ \cite[Theorem 1.4]{Ouchi2019OnEO}.
\end{enumerate}
\end{examples}
\subsection{Stability conditions}

We review the notion of stability condition on a triangulated category $\cat{D}$. Throughout we fix a surjection $\nu \colon K(\cat{D}) \to \Lambda$ from the Grothendieck group to a finite rank lattice, and a norm $|| - || \colon \Lambda \otimes \R \to \R_{\geq 0}$. Let $[E]$ denote the class of an object $E$ of $\cat{D}$ in $K(\cat{D})$.

A \defn{stability condition $\sigma = (P,Z)$} on $\cat{D}$ consists of an additive homomorphism $Z \colon \Lambda \to \C$ and a full additive subcategory $P(\phi) \subset \cat{D}$ for each $\phi \in \R$ such that
\begin{enumerate}
\item $P(\phi+1)=P(\phi)[1]$;
\item if $0\neq E\in P(\phi)$ then $Z(\nu([E])) = m(E)\exp(i\pi\phi)$ for some $m(E)\in \R_{>0}$;
\item each object $0\neq E \in \cat{D}$ admits a \defn{Harder--Narasimhan filtration}, \ie a finite collection of exact triangles
\[
\begin{tikzcd}
0 \ar{r} & E_1 \ar{r}\ar{d} & \cdots \ar{r} & E_{n-1} \ar{r} & E_n=E \ar{d}\\
& A_1\ar[dashed]{ul} &&& A_n\ar[dashed]{ul} &
\end{tikzcd}
\]
where $A_i \in P(\phi_i)$ with $\phi_1 > \cdots > \phi_n$;
\item there is a constant $K>0$ such that $m(E) \geq K|| \nu([E]) ||  $ for each $E\in P(\phi)$ and $\phi \in \R$.
\end{enumerate}
The objects of $P(\phi)$ are said to be \defn{semistable of phase $\phi$} and the quantity $m(E)$ is called the \defn{mass of $E$}. More generally, the \defn{mass of $0\neq E \in \cat{D}$} is defined in the above notation by
\[
m(E) = \sum_{i=1}^n m(A_i),
\]
 and the maximal and minimal phases are defined to be $\phi^+(E) = \phi_1$ and $\phi^-(E) = \phi_n$ respectively. These definitions make sense because the Harder--Narasimhan filtration is unique up to isomorphism. By convention $m(0)=0$ and $\phi^\pm(0)=-\infty$. The fourth condition above is called the \defn{support property} and implies that there is a strictly positive lower bound
\[
m = \inf_{0\neq E \in \cat{D}} m(E) \geq \frac{1}{K} \min \{ || \nu(\lambda)|| \colon 0\neq \lambda \in \Lambda\} >0
\]
on the masses of non-zero objects of $\cat{D}$. Since all norms on $\Lambda \otimes \R$ are equivalent the support property is independent of the particular choice of norm.

A stability condition $\sigma=(P,Z)$ determines a bounded t--structure on $\cat{D}$ with heart the extension-closure $P(0,1]$ of the collection of semistable objects with phases in the interval $(0,1]$.

Theorem 1.2 of \cite{MR2373143} implies that the set $\astab{\Lambda}{\cat{D}}$ of stability conditions on $\cat{D}$ admits the structure of a finite-dimensional complex manifold  such that the projection 
\[
\astab{\Lambda}{\cat{D}} \to \mor{\Lambda}{\C} \colon (P,Z) \mapsto Z
\]
 is a local isomorphism with respect to the linear complex structure on $\mor{\Lambda}{\C}$. The topology on $\astab{\Lambda}{\cat{D}}$ arises from the \defn{Bridgeland metric} 
 \[
d(\sigma, \tau) = \sup_{0\neq E \in \cat{D}} \max \left\{ \left| \log \frac{m_\sigma(E)}{m_\tau(E)} \right| , |\phi_\sigma^+(E)-\phi_\tau^+(E)|, |\phi_\sigma^-(E)-\phi_\tau^-(E)|  \right\}.
\]
When convenient, it suffices to take the supremum over all $\sigma$ semistable objects, and even over all such with phases in the interval $(0,1]$, when computing the metric. The space of stability conditions $\astab{\Lambda}{\cat{D}}$ has a left action 
\[
\alpha\cdot (P,Z) = (\alpha \circ P, Z\circ \alpha^{-1})
\]
of by the group $\aut{\Lambda}{\cat{D}}$ of exact auto-equivalences $\alpha$ such that the induced isomorphism $[E] \mapsto [\alpha E]$ on the Grothendieck group $K(\cat{D})$ descends to an automorphism of $\Lambda$. It also has a right action by the universal cover of $GL_2^+(\R)$  given by
\[
(P,Z)\cdot g=   (P\circ f, M^{-1}\circ Z)
\]
where we write $g=(M,f)$ as a pair consisting of a matrix $M\in GL_2^+(\R)$ and an increasing function $f \colon \R\to \R$ with $f(\phi+1)=f(\phi)+1$ such that the induced maps on $\R\PP^1 \cong \R/2\Z$ agree. This action preserves the collection of semistable objects. The universal cover $\C$ of the subgroup $\C^*\subset GL_2^+(\R)$ of dilations and rotations acts freely:  $w\in \C$ acts on charges via $Z \mapsto e^{-i\pi w}Z$, maps semistable objects of phase $\phi$ to ones of phase $\varphi-\Re(w)$, and rescales masses by $e^{\pi \Im(w)}$.

Although many examples are known, it is in general extremely difficult to compute $\astab{\Lambda}{\cat{D}}$, indeed even to show it is non-empty. However, it is conjectured that $\astab{\Lambda}{\cat{D}}$ is contractible, in particular connected, whenever it is non-empty.

Finally, we say that a stability condition is \defn{algebraic} if its heart is a finite length abelian category with finitely many isomorphism classes of simple objects. In several senses algebraic stability conditions are the simplest. Whenever $\cat{D}$ has a bounded t--structure with algebraic heart then we can construct algebraic stability conditions for $\Lambda  = K(\cat{D}) \cong \Z^n$, where $n$ is the number of isomorphism classes of simple objects in the heart, by freely assigning a charge in 
\[
\{ re^{i\pi\phi} \in \C \colon 0<r, 0< \phi \leq 1\}
\] 
to each isomorphism class of simple objects of the heart --- see \cite[Example 5.5]{MR2373143}. Moreover, if an entire component of $\astab{\Lambda}{\cat{D}}$ consists of algebraic stability conditions then that component is contractible \cite[Theorem 4.9]{MR3858773}.

\subsection{Mass growth}

It is useful to combine the masses and phases of the Harder--Narasimhan factors of an object into a single parameterised quantity. 
\begin{definition}[{\cite[\S 4.4]{dhkk} and \cite[Definition 3.1]{ikeda_2021}}]
The \defn{mass with parameter} of $E\in \cat{D}$ is 
\[
m_{\sigma,t}(E) = \sum_{i=1}^n m_\sigma(A_i)e^{\phi_i t} 
\]
where as above the $A_i \in P(\phi_i)$ are the Harder--Narasimhan factors of $E$. By convention $m_{\sigma,t}(0)=0$.
\end{definition}

The mass with parameter satisfies a triangle inequality for exact triangles, and is also closely related to the complexity.
\begin{lemma}[{\cite[Propositions 3.3 and 3.4]{ikeda_2021}}]
\label{mass inequalities}
Suppose $D\to E \to F \to D[1]$ is an exact triangle. Then $m_{\sigma,t}(E) \leq m_{\sigma,t}(D)+m_{\sigma,t}(F)$. Moreover,  
\[
m_{\sigma,t}(E)\leq m_{\sigma,t}(F)\delta_t(E,F)
\]
 for any $E,F\in \cat{D}$.
\end{lemma}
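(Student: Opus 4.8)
The plan is to deduce both assertions from one device: the behaviour of $m_{\sigma,t}$ under a two--term extension of semistable objects. For a semistable object $B\in P(p)$ one has $m_{\sigma,t}(B)=m_\sigma(B)e^{pt}$, so everything is governed by the function $F(w)=|w|\,e^{t\arg(w)/\pi}$ on charges $w\in\C$, which returns $m_{\sigma,t}(B)$ on the charge $Z(\nu[B])$ whenever $\arg$ is read off from the phase (this determination being unambiguous once all charges involved lie in a common sector of width $<\pi$). The target estimate is $m_{\sigma,t}(X)\le m_{\sigma,t}(B)+m_{\sigma,t}(B')$ for every exact triangle $B\to X\to B'\to B[1]$ with $B\in P(p)$ and $B'\in P(p')$ semistable. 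Granting it, the triangle inequality follows via the minimality statement that $m_{\sigma,t}(E)$ equals the minimum of $\sum_j m_{\sigma,t}(B_j)$ over all filtrations of $E$ by exact triangles with semistable subquotients $B_j$: the Harder--Narasimhan filtration realises this value, and any other such filtration can be sorted into decreasing phase by repeatedly replacing an adjacent out--of--order pair $B_j,B_{j+1}$ (sitting in a sub-triangle $B_j\to Y\to B_{j+1}$) by the Harder--Narasimhan filtration of $Y$, which by the estimate cannot increase $\sum_j m_{\sigma,t}$. Given $D\to E\to F$, I would pull back a Harder--Narasimhan filtration of $F$ along $E\to F$ and prepend one of $D$; this filters $E$ by the semistable factors of $D$ together with those of $F$, and minimality gives $m_{\sigma,t}(E)\le m_{\sigma,t}(D)+m_{\sigma,t}(F)$.

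The real work is the two--term estimate, and this is the step I expect to be the main obstacle. If $p>p'$ the triangle is itself the Harder--Narasimhan filtration of $X$ and equality holds; if $p=p'$ then $X$ lies in the abelian subcategory $P(p)$ and the charges are positively proportional, so again equality holds. If $p'>p+1$ then $\Hom(B',B[1])=0$, the triangle splits, and equality holds; the borderline $p'=p+1$ is treated directly inside $P(p)$ via the four--term exact sequence of the map $B'[-1]\to B$. The essential case is $p<p'<p+1$. The support/phase bounds force every Harder--Narasimhan factor $X_i$ of $X$ to have phase in $[p,p']$, an arc of width $<\pi$, and $\sum_i Z(\nu[X_i])=Z(\nu[B])+Z(\nu[B'])$. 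It therefore suffices to show that, among all ways of writing a fixed vector as a sum of vectors with arguments in such an arc $[\theta_1,\theta_2]$, the two--endpoint decomposition maximises $\sum_jF$. Replacing one intermediate summand at angle $\theta=\lambda\theta_1+(1-\lambda)\theta_2$ by its endpoint components $e^{i\theta}=c_1e^{i\theta_1}+c_2e^{i\theta_2}$ changes $\sum F$ by $c_1e^{t\theta_1/\pi}+c_2e^{t\theta_2/\pi}-e^{t\theta/\pi}$, which is nonnegative: here $c_1=\sin(\theta_2-\theta)/\sin(\theta_2-\theta_1)\ge\lambda$ and $c_2=\sin(\theta-\theta_1)/\sin(\theta_2-\theta_1)\ge 1-\lambda$ because $x\mapsto\sin(x)/x$ is decreasing on $(0,\pi)$, whence $c_1e^{t\theta_1/\pi}+c_2e^{t\theta_2/\pi}\ge\lambda e^{t\theta_1/\pi}+(1-\lambda)e^{t\theta_2/\pi}\ge e^{t\theta/\pi}$ by convexity of $u\mapsto e^{tu/\pi}$. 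Moving every Harder--Narasimhan factor of $X$ onto the two endpoints thus only increases $\sum_jF$, and the outcome is forced to be $Z(\nu[B])+Z(\nu[B'])$; hence $m_{\sigma,t}(X)\le m_{\sigma,t}(B)+m_{\sigma,t}(B')$.

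Finally, the complexity bound follows from the triangle inequality by telescoping along any diagram as in (\ref{complexity diagram}) realising $\delta_t(E,F)$, built from shifts $E[d_1],\dots,E[d_n]$ with stages $G_0=0,\dots,G_n=F\oplus F'$. Applying $m_{\sigma,t}(G_k)\le m_{\sigma,t}(G_{k-1})+m_{\sigma,t}(E[d_k])$ at each stage and using $m_{\sigma,t}(E[d_k])=e^{d_kt}m_{\sigma,t}(E)$ gives $m_{\sigma,t}(F)\le m_{\sigma,t}(F\oplus F')=m_{\sigma,t}(G_n)\le m_{\sigma,t}(E)\sum_k e^{d_kt}$; taking the infimum over all such diagrams yields the asserted estimate $m_{\sigma,t}(F)\le m_{\sigma,t}(E)\,\delta_t(E,F)$, bounding the mass of an object built from $E$ by its complexity relative to $E$.
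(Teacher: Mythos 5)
Your proof of the two-term estimate (the triangle inequality when both outer terms are semistable) is correct and complete: the four phase cases are the right ones, and the projection-onto-extreme-rays convexity computation in the essential case $p<p'<p+1$ is sound; this is in substance the key lemma behind Ikeda's proof (the paper itself offers no proof of this lemma, citing Ikeda). The telescoping argument for the complexity bound is also fine; note that what you actually prove is $m_{\sigma,t}(F)\le m_{\sigma,t}(E)\,\delta_t(E,F)$, which is the \emph{correct} statement --- the inequality as printed in the lemma has the roles of $E$ and $F$ garbled (it is your version that the paper uses later, in the proof of the bound for $h_{\sigma,t}(\alpha\beta)$), so that discrepancy is a typo in the paper, not an error on your side. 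One small point there: the step $m_{\sigma,t}(F)\le m_{\sigma,t}(F\oplus F')$ needs additivity of mass on direct sums, which follows from uniqueness of Harder--Narasimhan filtrations rather than from the triangle inequality; say a sentence about it.

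The genuine gap is the reduction of the general triangle inequality to the two-term estimate, i.e.\ your sorting argument. Replacing an adjacent out-of-order pair $B_j,B_{j+1}$ by the Harder--Narasimhan factors of the extension $Y$ is \emph{not} a transposition: $Y$ may have arbitrarily many semistable factors, so a single step can lengthen the filtration and can strictly increase the number of out-of-order pairs (a factor to the left with phase below $\phi(B_j)$ was inverted with two factors before the step and is inverted with \emph{every} factor of $Y$ after it). So this is not a bubble sort, and termination --- which your argument requires, since you compare the original total against that of the terminal, fully sorted filtration --- is not evident; the obvious alternative inductions fail for the same underlying reason, namely that after one application of the octahedral axiom the intermediate object has uncontrolled Harder--Narasimhan length. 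This can be repaired, but it takes a genuinely additional argument, for instance: (i) by the equality analysis of your own convexity computation, each step either strictly decreases the total $t=0$ mass $\sum_j m_\sigma(B_j)$ or is an honest transposition (at most one factor per extreme ray, with charges forced to be $Z(\nu[B_{j+1}])$ and $Z(\nu[B_j])$); (ii) transposition-only runs terminate because the inversion count drops; and (iii) the support property gives $\|\nu([B_j])\|\le (\text{total mass})/K$, so only finitely many lattice classes, hence finitely many possible values of the total mass, ever occur, ruling out infinitely many strict decreases. Without something of this kind, the minimality claim --- and with it the first inequality of the lemma --- remains unproven.
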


\begin{definition}[{\cite[\S1.1]{ikeda_2021}}]
The \defn{mass growth with parameter} of the exact endofunctor $\alpha \colon \cat{D} \to \cat{D}$ is 
\[
h_{\sigma,t}(\alpha) = \sup_{E\in \cat{D}}  \limsup_{n\to \infty} \left(\frac{1}{n} \log m_{\sigma,t}(\alpha^nE)\right)
\]
where, by convention, $\log(0) = -\infty$. Using the triangle inequality for mass with parameter (Lemma \ref{mass inequalities}), one obtains the same result by taking instead the supremum  over all semistable objects $E$, or even over all semistable objects with phases in the interval $(0,1]$.
\end{definition}

We recapitulate the properties of the mass growth as developed in \cite{ikeda_2021}. 
\begin{lemma}
\label{basic properties of mass growth}
The following basic properties  follow directly from the definition.
\begin{enumerate}
\item For any $k\in \N$ we have $h_{\sigma,t}(\alpha^k)=kh_{\sigma,t}(\alpha)$. 
\item For any $d\in \Z$ we have $h_{\sigma,t}(\alpha[d]) = h_{\sigma,t}(\alpha) +dt$.
\item $h_{\sigma,t}(\alpha^{-1}\beta\alpha)  = h_{\alpha\sigma,t}(\beta)$ for an exact auto-equivalence $\alpha$ and endofunctor $\beta$. In particular, if $\alpha$ preserves the component $\astabo{\Lambda}{\cat{D}}$ of $\sigma$ then $h_{\sigma,t}(\alpha^{-1}\beta\alpha)  = h_{\sigma,t}(\beta)$ by Proposition \ref{def inv}.
\end{enumerate}
\end{lemma}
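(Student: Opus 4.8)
The plan is to reduce each identity to the behaviour of the parameterised mass $m_{\sigma,t}$ under the operation in question, and then feed this into the definition of $h_{\sigma,t}$, using crucially that the outer supremum ranges over \emph{all} objects. Two transformation rules carry most of the weight. Since the shift $[1]$ raises the phase of a semistable object by one while preserving its mass, it scales the parameterised mass by $m_{\sigma,t}(E[d]) = e^{dt}m_{\sigma,t}(E)$. And since an exact $\alpha$ sends the Harder--Narasimhan filtration of $E$ relative to $\sigma$ to that of $\alpha E$ relative to $\alpha\sigma$, leaving the phases unchanged and acting on the mass of each semistable factor by $m_{\alpha\sigma}(\alpha A) = |Z(\alpha^{-1}\nu([\alpha A]))| = |Z(\nu([A]))| = m_\sigma(A)$ (writing $\alpha$ also for the induced automorphism of $\Lambda$, so that $\nu([\alpha A]) = \alpha\,\nu([A])$), uniqueness of Harder--Narasimhan filtrations gives the equivariance $m_{\alpha\sigma,t}(\alpha E) = m_{\sigma,t}(E)$.

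For (2) I would combine the first rule with the commutation $(\alpha[d])^n \cong [dn]\,\alpha^n$ (shifts are central for exact $\alpha$) to obtain $m_{\sigma,t}((\alpha[d])^n E) = e^{dnt}m_{\sigma,t}(\alpha^n E)$; dividing by $n$, taking $\limsup_n$, and then $\sup_E$ simply adds the constant $dt$, yielding $h_{\sigma,t}(\alpha)+dt$. For (3) I would telescope $(\alpha^{-1}\beta\alpha)^n \cong \alpha^{-1}\beta^n\alpha$ and rewrite the equivariance as $m_{\sigma,t}(\alpha^{-1}F) = m_{\alpha\sigma,t}(F)$ with $F = \beta^n\alpha E$, so that $m_{\sigma,t}((\alpha^{-1}\beta\alpha)^n E) = m_{\alpha\sigma,t}(\beta^n(\alpha E))$. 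Because an auto-equivalence is essentially surjective, $\alpha E$ ranges over all objects as $E$ does, so $\sup_E$ turns the right-hand side into $h_{\alpha\sigma,t}(\beta)$. The ``in particular'' clause is then immediate: if $\alpha$ preserves the component of $\sigma$ then $\alpha\sigma$ lies in that component, and $h_{\cdot,t}(\beta)$ is constant on components by Proposition~\ref{def inv}.

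The one statement that needs an actual argument rather than bookkeeping is (1), and this is where I expect the (modest) main obstacle. The inequality $h_{\sigma,t}(\alpha^k) \le k\,h_{\sigma,t}(\alpha)$ is immediate, since $\{kn\}$ is a subsequence of $\{n\}$ and a limsup along a subsequence cannot exceed the full limsup. For the reverse I would fix $E$, pick a subsequence $m_j \to \infty$ realising $\limsup_m \tfrac1m \log m_{\sigma,t}(\alpha^m E)$, and pass by the pigeonhole principle to a further subsequence $m_{j_l} = k n_l + r$ of constant residue $r$. Writing $\alpha^{m_{j_l}} E = (\alpha^k)^{n_l}(\alpha^r E)$ and using $k n_l/(k n_l + r) \to 1$ shows that the $\alpha^k$-growth rate along $\alpha^r E$ is at least $k$ times the chosen value; since the outer supremum defining $h_{\sigma,t}(\alpha^k)$ sees the object $\alpha^r E$, taking $\sup_E$ gives $k\,h_{\sigma,t}(\alpha) \le h_{\sigma,t}(\alpha^k)$. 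The only subtlety to handle along the way is the degenerate case in which some $\alpha^n E$ vanishes, which the convention $\log 0 = -\infty$ disposes of.
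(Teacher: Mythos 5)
Your proposal is correct, and it is essentially the paper's own approach: the paper (following Ikeda) asserts these properties ``follow directly from the definition'' and writes out no argument, and your two transformation rules $m_{\sigma,t}(E[d])=e^{dt}m_{\sigma,t}(E)$ and $m_{\alpha\sigma,t}(\alpha E)=m_{\sigma,t}(E)$ together with the subsequence/pigeonhole bookkeeping for part (1) are exactly the intended direct verification.
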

 \begin{proposition}[{\cite[Proposition 3.10]{ikeda_2021}}]
 \label{def inv}
Mass growth is invariant under deformation: if $\sigma$ and $\tau$ are in the same connected component of $\astab{\Lambda}{\cat{D}}$ then $h_{\sigma,t}(\alpha) = h_{\tau,t}(\alpha)$ for each $t\in \R$.
 \end{proposition}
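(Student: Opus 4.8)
The plan is to reduce the statement to a single uniform multiplicative comparison between the parameterised masses $m_{\sigma,t}$ and $m_{\tau,t}$, valid for \emph{every} object with a constant independent of that object. Such a constant contributes only an additive term after taking $\tfrac{1}{n}\log$, so it washes out in the $\limsup$ and forces the two mass growths to agree. First I would observe that it suffices to treat a pair $\sigma,\tau$ in a common component with $d(\sigma,\tau)<\infty$: the relation $d(\sigma,\tau)<\infty$ is an equivalence relation whose classes are open (each is a union of finite-radius balls) and hence clopen, so any connected component lies in a single class; alternatively the manifold structure makes components path-connected, and a path is compact and thus covered by finitely many balls of finite radius. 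Write $d = d(\sigma,\tau)$.

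The key step is the inequality
\[
m_{\tau,t}(D) \leq e^{(|t|+1)d}\, m_{\sigma,t}(D) \qquad (D\in\cat{D}),
\]
which I would prove in two stages. Suppose first that $E$ is $\sigma$-semistable of phase $\phi$, so $m_{\sigma,t}(E) = m_\sigma(E)e^{\phi t}$. The definition of the Bridgeland metric bounds the mass ratio, giving $m_\tau(E)\leq e^{d}m_\sigma(E)$, and bounds $|\phi_\sigma^\pm(E)-\phi_\tau^\pm(E)|\leq d$; since $\phi_\sigma^+(E)=\phi_\sigma^-(E)=\phi$, every $\tau$-Harder--Narasimhan phase $\psi_j$ of $E$ lies in the window $[\phi-d,\phi+d]$. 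Hence $e^{\psi_j t}\leq e^{\phi t}e^{|t|d}$, and summing over the $\tau$-factors yields $m_{\tau,t}(E)\leq e^{|t|d}e^{\phi t}m_\tau(E)\leq e^{(|t|+1)d}m_{\sigma,t}(E)$.

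For a general $D$ with $\sigma$-Harder--Narasimhan factors $A_1,\dots,A_k$, I would apply the triangle inequality for $m_{\tau,t}$ from Lemma \ref{mass inequalities} (valid for any stability condition) inductively along the $\sigma$-filtration to get $m_{\tau,t}(D)\leq\sum_i m_{\tau,t}(A_i)$. Each $A_i$ is $\sigma$-semistable, so the first stage applies factor by factor, and summing reconstitutes $m_{\sigma,t}(D)=\sum_i m_{\sigma,t}(A_i)$ on the right, giving the displayed bound. Applying it to $D=\alpha^n E$ and taking $\tfrac{1}{n}\log$ gives
\[
\tfrac{1}{n}\log m_{\tau,t}(\alpha^n E)\leq \tfrac{(|t|+1)d}{n}+\tfrac{1}{n}\log m_{\sigma,t}(\alpha^n E),
\]
so passing to $\limsup_{n\to\infty}$ annihilates the first term and taking the supremum over $E$ yields $h_{\tau,t}(\alpha)\leq h_{\sigma,t}(\alpha)$; interchanging $\sigma$ and $\tau$ gives equality.

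I expect the one genuine subtlety to be the passage from semistable objects to arbitrary ones. Although the mass growth may be computed as a supremum over $\sigma$-semistable $E$, the objects $\alpha^n E$ whose parameterised mass is actually measured are \emph{not} semistable, so it is essential that the comparison constant $e^{(|t|+1)d}$ be uniform over all objects. Securing this uniformity is precisely what the factor-by-factor estimate along the $\sigma$-Harder--Narasimhan filtration, combined with the triangle inequality for $m_{\tau,t}$, is designed to do.
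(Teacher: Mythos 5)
Your proof is correct and follows essentially the same route as the paper's proof, which here is simply a citation of Ikeda's Proposition 3.10: one establishes the uniform comparison $m_{\tau,t}(E)\leq e^{(1+|t|)\,d(\sigma,\tau)}\,m_{\sigma,t}(E)$ first for $\sigma$-semistable objects using the mass and phase terms of the Bridgeland metric, then for arbitrary objects by running the triangle inequality of Lemma \ref{mass inequalities} along the $\sigma$-Harder--Narasimhan filtration, after which the constant dies in $\tfrac{1}{n}\log(-)$ and the $\limsup$. Your preliminary reduction (finite-distance classes are clopen, so a connected component has finite diameter in the generalized metric) is also the standard justification and is handled the same way in the cited source, so there is nothing to add.
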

We now consider lower bounds for the mass growth at zero. Either $\alpha$ is \defn{object-wise nilpotent}, \ie for each $E\in \cat{D}$ there is some $n\in \N$ with  $\alpha^nE=0$, in which case $h_{\sigma,t}(\alpha)=-\infty$ for all $t\in \R$, or
\[
h_{\sigma,0}(\alpha)  \geq  \limsup_{n\to \infty} \left(\frac{1}{n} \log m_{\sigma}\right)= 0
\]
where $m_\sigma>0$ is the infimal mass of the non-zero objects. A more interesting lower bound is provided by the following result.
\begin{proposition}[{\cite[Proposition 3.11]{ikeda_2021}}]
\label{sprectral bound}
Suppose that $\sigma \in \astab{\Lambda}{\cat{D}}$ and that the endofunctor $\alpha$ induces a linear map $[\alpha] \colon \Lambda \to \Lambda$. Then $h_{\sigma,0}(\alpha) \geq \log \rho( [\alpha] )$ where $\rho( [\alpha] )$ denotes the spectral radius of  $[\alpha]$. 
\end{proposition}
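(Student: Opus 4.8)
The plan is to bound the mass of $\alpha^n E$ below by the norm of its class in $\Lambda$, and then to choose $E$ so that this norm grows at the rate dictated by the spectral radius. First I would record that at $t=0$ the parameterised mass reduces to the ordinary mass, $m_{\sigma,0}(E)=m_\sigma(E)$, since $e^{\phi_i\cdot 0}=1$; thus $h_{\sigma,0}(\alpha)=\sup_{E}\limsup_{n}\frac1n\log m_\sigma(\alpha^n E)$, and it suffices to produce a single $E$ realising the rate $\log\rho([\alpha])$.

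The elementary estimate driving everything is that the support property extends from semistable objects to all objects. If $E$ has Harder--Narasimhan factors $A_1,\dots,A_k$, then
\[
m_\sigma(E)=\sum_i m_\sigma(A_i)\geq K\sum_i\lVert\nu([A_i])\rVert\geq K\Bigl\lVert\sum_i\nu([A_i])\Bigr\rVert=K\lVert\nu([E])\rVert,
\]
using the support property on each semistable $A_i$, the triangle inequality for the norm, and $[E]=\sum_i[A_i]$ in $K(\cat{D})$. Since $\alpha$ induces $[\alpha]$ on $\Lambda$ we have $\nu([\alpha^n E])=[\alpha]^n\nu([E])$, so $m_\sigma(\alpha^n E)\geq K\lVert[\alpha]^n\nu([E])\rVert$, and therefore
\[
\limsup_{n\to\infty}\frac1n\log m_\sigma(\alpha^n E)\geq\limsup_{n\to\infty}\frac1n\log\lVert[\alpha]^n\nu([E])\rVert.
\]

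It then remains to find an object $E$ for which the right-hand side equals $\log\rho([\alpha])$. Working in $V=\Lambda\otimes\R$, I would decompose it into $[\alpha]$-invariant real subspaces $W\oplus W'$, where $W$ is the sum of the generalised eigenspaces for eigenvalues of modulus $\rho([\alpha])$ and $W'$ collects the smaller-modulus eigenvalues. For any vector whose projection to $W$ is nonzero, real Jordan-form analysis gives growth rate exactly $\log\rho([\alpha])$. Next I would observe that the classes $\{\nu([E]):E\in\cat{D}\}$ generate $\Lambda$, hence span $V$, because $\nu$ is surjective and $K(\cat{D})$ is generated by object classes; since $W\neq 0$ the projection $V\to W$ is nonzero, so it cannot vanish on every $\nu([E])$. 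Choosing $E$ with nonzero projection and taking the supremum yields the claim.

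The one genuinely delicate point is the Jordan-form step: one must confirm that a vector meeting the top eigenspace achieves growth rate exactly $\rho([\alpha])$, with no cancellation dragging $\lVert[\alpha]^n v\rVert$ below $\rho([\alpha])^n$ along a subsequence. I would handle this by projecting $v$ onto a single top generalised eigenspace --- the projection commutes with $[\alpha]$ and is bounded below on its image --- within which $\lVert[\alpha]^n\pi(v)\rVert$ is asymptotic to $C\,n^{j}\rho([\alpha])^n$. The polynomial factors from Jordan blocks and the rotational factors from complex-conjugate eigenvalue pairs contribute only $O(\log n/n)\to 0$ to $\frac1n\log\lVert\cdot\rVert$, so in fact the limit, not merely the limsup, equals $\log\rho([\alpha])$, and the slower-growing $W'$-component is irrelevant.
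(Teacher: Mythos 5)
Your proof is correct. Note, however, that the paper itself contains no proof of this statement: it is quoted from Ikeda \cite[Proposition 3.11]{ikeda_2021}, so the comparison must be with that cited argument. Both proofs share the same first half, exactly as you write it: the support property propagates from semistable objects to all objects via the Harder--Narasimhan filtration and the triangle inequality for the norm, giving $m_\sigma(\alpha^n E)\geq K\,\|[\alpha]^n\nu([E])\|$. They differ in the linear algebra that finishes the job. The cited route takes finitely many objects $E_1,\dots,E_r$ whose classes span $\Lambda\otimes\R$ (possible since $\nu$ is surjective and object classes generate $K(\cat{D})$ --- indeed every class is an object class, as $[E]-[F]=[E\oplus F[1]]$), notes that $\max_i\|[\alpha]^n\nu([E_i])\|$ is comparable, with constants independent of $n$, to the operator norm $\|[\alpha]^n\|$, and concludes by Gelfand's formula $\lim_{n\to\infty}\frac{1}{n}\log\|[\alpha]^n\|=\log\rho([\alpha])$; no spectral decomposition is needed. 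You instead exhibit a single object whose class has nonzero projection to the top generalised eigenspaces and run a Jordan-form estimate. Your handling of the one delicate point is sound: the spectral projection $\pi$ onto a top generalised eigenspace commutes with $[\alpha]$ and is a bounded operator, so $\|[\alpha]^n v\|\geq c\,\|[\alpha]^n\pi(v)\|\geq c'\,n^{j}\rho([\alpha])^n$ for large $n$, which rules out any cancellation along a subsequence. Your route yields marginally more than the statement (a single witness object, for which the class-norm growth rate is a genuine limit), at the cost of the real Jordan-form bookkeeping for complex-conjugate eigenvalue pairs, which the Gelfand argument sidesteps entirely; both arguments handle the degenerate case $\rho([\alpha])=0$ trivially.
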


\begin{example}[Gepner points]
\label{gepner1}
A stability condition $\sigma \in \astab{\Lambda}{\cat{D}}$ is a \defn{Gepner point} \cite{toda-gepner} for $\alpha\in \aut{\Lambda}{\cat{D}}$ if $\alpha \cdot \sigma = \sigma\cdot w$ for some $w \in \C$, equivalently if $\sigma\cdot \C$ is a fixed point of the action of $\alpha$ on the quotient $\astab{\Lambda}{\cat{D}}/\C$. If there is a Gepner point in the component of $\sigma$ then, without loss of generality by Proposition \ref{def inv}, we may assume $\sigma$ is that Gepner point and compute the mass growth as follows:
\begin{align*}
h_{\sigma,t}(\alpha) &= \sup_{E\in \cat{D}}  \limsup_{n\to \infty} \left(\frac{1}{n} \log m_{\sigma,t}(\alpha^nE)\right)\\
&= \sup_{E\in \cat{D}}  \limsup_{n\to \infty} \left(\frac{1}{n} \log m_{\alpha^{-n}\cdot\sigma,t}(E)\right)\\
&= \sup_{E\in \cat{D}}  \limsup_{n\to \infty} \left(\frac{1}{n} \log m_{\sigma\cdot (-nw),t}(E)\right)\\
&= \sup_{E\in \cat{D}}  \limsup_{n\to \infty} \left(\frac{1}{n} \log e^{-n\pi\Im(w) +n\Re(w)t}m_{\sigma,t}(E)\right)\\
&=-\pi\Im(w) +\Re(w)t.
\end{align*}
Since $h_{\sigma,0}(\alpha),h_{\sigma,0}(\alpha^{-1})\geq0$ we conclude that $w\in \R$ so that  $h_{\sigma,t}(\alpha)= \Re(w)t$. In particular $\alpha$ acts by rotating phases.
\end{example}

\begin{example}[Pseudo-Anosov auto-equivalences]
\label{pA1}
An auto-equivalence $\alpha$ is \defn{pseudo-Anosov} in the sense of   \cite{Fan2019OnPA} if for all $0\neq E \in \cat{D}$
\[
\limsup_{n\to \infty} \log \frac{1}{n} m_\sigma(\alpha^n E) = \log \lambda
\]
where $\lambda>1$ is the \defn{stretch factor}. Clearly this implies that $h_{\sigma,0}(\alpha)=\log \lambda$. 

Following \cite{Fan2019OnPA} we say $\alpha$ is  \defn{DHKK pseudo-Anosov} --- see \cite[Definition 4.1]{dhkk} where this is referred to simply as \defn{pseudo-Anosov} --- if there is a stability condition $\sigma$ such that $\alpha \cdot \sigma = \sigma\cdot g$ for some $g =(M,f)$ in the universal cover of $GL_2^+(\R)$ with
\[
M= \begin{pmatrix}
r & 0\\
0& 1/r
\end{pmatrix}
\qquad \text{or} \qquad
\begin{pmatrix}
1/r & 0\\
0& r
\end{pmatrix}
\]
where $|r|>1$. Every DHKK pseudo-Anosov auto-equivalence is pseudo-Anosov by \cite[Theorem 2.17]{Fan2019OnPA}. The proof uses a similar calculation to the one below, and shows that $\lambda=|r|$. Importantly however, the converse is false --- see \cite[Theorem 1.4]{Fan2019OnPA} for examples --- and the authors of \cite{Fan2019OnPA} argue that their weaker notion is a closer analogue of the classical notion of pseudo-Anosov element of the mapping class group.

The mass growth of a DHKK pseudo-Anosov auto-equivalence can be computed as follows. Firstly there is an upper bound
\begin{align*}
h_{\sigma,t}(\alpha) & = \sup_{0\neq E\ \text{semistable}} \limsup_{n\to \infty} \frac{1}{n} \log m_{\sigma,t}(\alpha^nE) \\
&= \sup_{0\neq E\ \text{semistable}} \limsup_{n\to \infty}  \frac{1}{n} \log m_{\alpha^{-n}\cdot \sigma,t}(E) \\
&= \sup_{0\neq E\ \text{semistable}} \limsup_{n\to \infty} \frac{1}{n} \log m_{ \sigma\cdot g^{-n} ,t}(E) \\
&= \sup_{0\neq E\ \text{semistable}} \limsup_{n\to \infty}\frac{1}{n} \left(  \log |M^n Z(E)| + f^{-n}(\phi(E)) t \right)\\
&\leq \log |r| + \sup_{0\leq \phi<1} \limsup_{n\to \infty} f^{-n}(\phi) t\\
&=\log|r| + f^{-1}(0) t
\end{align*}
because $f^{-1}(0)\in \Z$ from which, by induction, $nf^{-1}(0) \leq f^{-n}(\phi) < nf^{-1}(0)+1$ for any $0\leq \phi<1$. In fact we have equality above because we can use the right $\C$ action to move $\sigma$ to $\sigma'$ in the same component where $\sigma'$ has a semistable object $E$ of phase $0$ and charge $1$, or of phase $1/2$ and charge $i$ respectively, according to whether 
\[
M= \begin{pmatrix}
r & 0\\
0& 1/r
\end{pmatrix}
\quad \text{or} \quad
\begin{pmatrix}
1/r & 0\\
0& r
\end{pmatrix}.
\]
Then $h_{\sigma,t}(\alpha)  = h_{\sigma',t}(\alpha)$ by Proposition \ref{def inv}, and we still have $\alpha\cdot \sigma' = \sigma'\cdot g$ so that 
\[
h_{\sigma',t}(\alpha) \geq \limsup_{n\to \infty} \frac{1}{n} \log m_{\sigma',t}(\alpha^nE) = \log|r| +f^{-1}(0)t
\]
Finally,  $0=f(f^{-1}(0)) = f(0) + f^{-1}(0)$ because $f^{-1}(0)\in \Z$ so $f^{-1}(0)=-f(0)$. 
\end{example}
Mass growth is determined by the action on a split-generator, when one exists, and provides a lower bound for the entropy --- indeed the two coincide when $\cat{D}$ has an algebraic t--structure.
\begin{theorem}[{\cite[Theorem 3.5]{ikeda_2021}}]
\label{mass growth bounds entropy}
If $G\in \cat{D}$ is a split-generator then 
\[
\infty> h_t(\alpha)\geq h_{\sigma,t}(\alpha) = \limsup_{n\to \infty} \frac{1}{n}\log m_{\sigma,t}(\alpha^nG).
\]
Moreover, if there is an algebraic stability condition in the same component of $\astab{\Lambda}{\cat{D}}$ as $\sigma$ then $h_{\sigma,t}(\alpha) = h_t(\alpha)$. 
\end{theorem}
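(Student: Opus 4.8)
The plan is to prove the three relations in the displayed formula first and then the algebraic case separately; throughout I would move freely between complexity and mass-with-parameter using Lemmas \ref{complexity inequalities} and \ref{mass inequalities}.

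Finiteness of $h_t(\alpha)$ comes directly from submultiplicativity of complexity: combining parts (1) and (3) of Lemma \ref{complexity inequalities} gives $\delta_t(G,\alpha^nG)\le \delta_t(G,\alpha G)^n$, and since $\alpha G\in\thick{}{G}=\cat{D}$ the factor $\delta_t(G,\alpha G)$ is finite, so $h_t(\alpha)\le \log\delta_t(G,\alpha G)<\infty$. For the identity $h_{\sigma,t}(\alpha)=\limsup_{n}\tfrac1n\log m_{\sigma,t}(\alpha^nG)$, the inequality $\ge$ is immediate from restricting the defining supremum to $E=G$. For $\le$, given any $E$ I would apply the mass--complexity inequality of Lemma \ref{mass inequalities} followed by the functoriality bound Lemma \ref{complexity inequalities}(3) to obtain $m_{\sigma,t}(\alpha^nE)\le m_{\sigma,t}(\alpha^nG)\,\delta_t(\alpha^nG,\alpha^nE)\le m_{\sigma,t}(\alpha^nG)\,\delta_t(G,E)$, where $\delta_t(G,E)<\infty$ because $E\in\thick{}{G}$; taking $\tfrac1n\log$ kills the constant $\delta_t(G,E)$ in the $\limsup$, and a supremum over $E$ gives the claim. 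The remaining inequality $h_t(\alpha)\ge h_{\sigma,t}(\alpha)$ is one more use of Lemma \ref{mass inequalities}: from $m_{\sigma,t}(\alpha^nG)\le m_{\sigma,t}(G)\,\delta_t(G,\alpha^nG)$ one takes $\tfrac1n\log$ and $\limsup$.

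For the final claim, deformation invariance (Proposition \ref{def inv}) lets me replace $\sigma$ by the algebraic stability condition in its component, so I may assume $\sigma$ is itself algebraic with heart $\mathcal{A}=P(0,1]$ and finitely many simples $S_1,\dots,S_k$. Then $G'=S_1\oplus\cdots\oplus S_k$ split-generates $\cat{D}$, so by the first part I may compute both $h_t$ and $h_{\sigma,t}$ using $G'$, and it remains to prove the reverse inequality $h_t(\alpha)\le h_{\sigma,t}(\alpha)$. This I would deduce from a single uniform bound: there is a constant $C=C(\sigma,t)$ with $\delta_t(G',E)\le C\,m_{\sigma,t}(E)$ for every $0\neq E\in\cat{D}$; applying it to $E=\alpha^nG'$ and passing to the limit then finishes the proof. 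To establish the uniform bound I would first reduce to semistable objects: subadditivity over the Harder--Narasimhan triangles (Lemma \ref{complexity inequalities}(2)) bounds $\delta_t(G',E)$ by $\sum_i\delta_t(G',A_i)$ while $m_{\sigma,t}(E)=\sum_i m_{\sigma,t}(A_i)$. A shift reduces a semistable $A_i$ of phase $\phi_i$ to $B=A_i[-d_i]\in\mathcal{A}$ via $\delta_t(G',A_i)\le e^{d_it}\delta_t(G',B)$; since $\phi_i-d_i\in(0,1]$ the factor $e^{d_it}$ recombines with $m_\sigma(A_i)=m_\sigma(B)$ to reconstitute $m_{\sigma,t}(A_i)$ up to the bounded multiplier $e^{-(\phi_i-d_i)t}\le e^{|t|}$. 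Finally, for $B\in\mathcal{A}$ one has $\delta_t(G',B)\le \ell(B)$ by building $B$ from the simples through a composition series.

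The crux, and the step I expect to be the main obstacle, is therefore the purely lattice-theoretic estimate $\ell(B)\le C'\,m_\sigma(B)$ for objects of the heart. Writing $\nu([B])=\sum_j n_j\,\nu([S_j])$ with $n_j\ge 0$ and $\ell(B)=\sum_j n_j$, the point is that all the charges $Z(\nu([S_j]))$ lie in the cone of phases in $(0,1]$; by finiteness of the $S_j$ their phases lie in a compact subinterval $[\theta_{\min},1]$ with $\theta_{\min}>0$, so this cone is pointed and admits a linear functional $L$ on $\C$ with $L(Z(\nu([S_j])))\ge c>0$ for all $j$. Then $c\,\ell(B)\le L(Z(\nu([B])))\le \|L\|\,|Z(\nu([B]))|\le \|L\|\,m_\sigma(B)$, the last step because the mass dominates the modulus of the charge. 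The support property, condition (4), guarantees the simple masses are bounded below and is what makes the constants uniform. Assembling the reductions yields $\delta_t(G',E)\le C'e^{|t|}\,m_{\sigma,t}(E)$, which completes the argument.
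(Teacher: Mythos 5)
Your proof is correct. Be aware, though, that the paper itself contains no proof of this statement: it is quoted, with citation, from Ikeda \cite[Theorem 3.5]{ikeda_2021}, so the only comparison available is with Ikeda's original argument --- which your proposal essentially reconstructs. The first display is proved there exactly as you do it (restrict the supremum to $G$ for one direction; mass--complexity inequality plus functoriality of $\delta_t$ for the other; submultiplicativity for finiteness), and the algebraic case likewise proceeds by establishing a uniform comparison $\delta_t(G',E)\le C(t)\,m_{\sigma,t}(E)$ via the Harder--Narasimhan filtration, shifting semistables into the heart, and comparing length with mass there.

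Two fine points. First, you apply the mass--complexity inequality in the form $m_{\sigma,t}(X)\le m_{\sigma,t}(Y)\,\delta_t(Y,X)$, i.e.\ the mass of the built object is bounded by the mass of the building block times the complexity of $X$ relative to $Y$. That is the correct orientation (it is Ikeda's Proposition 3.4, and it is how the present paper itself uses the lemma in the proof following Theorem \ref{mass growth bounds entropy}), but note that Lemma \ref{mass inequalities} as literally printed has the arguments of $\delta_t$ in the opposite order; under that literal reading your key step would require $\delta_t(E,G)<\infty$ for arbitrary $E$, i.e.\ that every object split-generates, which is false. So you are silently --- and correctly --- reading through a typo. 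Second, your claim $\delta_t(G',B)\le\ell(B)$ for $B$ in the heart needs one more line: the composition-series triangles have cones $S_{j}$, which are direct summands of $G'$ rather than shifts of it, so one should note $\delta_t(G',S_j)\le 1$ (immediate, since $S_j\oplus\bigl(\oplus_{i\ne j}S_i\bigr)\cong G'$ is a one-triangle diagram) and then apply the subadditivity of Lemma \ref{complexity inequalities}(2) along the series. With these glosses the rest assembles exactly as you say; your cone/linear-functional estimate $\ell(B)\le C'\,m_\sigma(B)$ is sound, and in fact slightly more robust than you suggest --- it works directly with the charges $Z(\nu([S_j]))$, needs no independence of the classes $\nu([S_j])$ in $\Lambda$, and uses only the finiteness of the set of simples (the support property is not actually required for the constant $c>0$).
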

\begin{remark}
It is easy to see that the entropy $h_t(\id_\cat{D}) \leq 0$, but equality is not evident except for $t=0$, see the comment on page $6$ of \cite{dhkk}. However, Lemma \ref{basic properties of mass growth} and Theorem \ref{mass growth bounds entropy} imply that $h_t(\id)=0$ when $\astab{\Lambda}{\cat{D}}\neq \emptyset$,  and therefore that $h_{t}(\alpha^k)=kh_{t}(\alpha)$ for all $k\in \N$ in this case. Conversely, if $h_t(\id_\cat{D})<0$ for some $t\in \R$ then $\cat{D}$ does not admit any stability conditions.
\end{remark}

\begin{lemma}
Let $G\in \cat{D}$ be a split generator and $\alpha$ and $\beta$ commuting exact endomorphisms of $\cat{D}$.  Then for $\sigma\in \astab{\Lambda}{\cat{D}}$ there is an inequality
\[
h_{\sigma,t}(\alpha\beta)\leq h_{\sigma,t}(\alpha) + h_t(\beta).
\]
If there is an algebraic stability condition in the component $\astabo{\Lambda}{\cat{D}}$ of $\sigma$ then 
\[
h_{\sigma,t}(\alpha\beta)\leq h_{\sigma,t}(\alpha) + h_{\sigma,t}(\beta).
\]
\end{lemma}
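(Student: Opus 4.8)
The plan is to compute every mass growth in sight using the split generator $G$, and then to bound the parameterised mass of $\alpha^n\beta^nG$ by that of $\alpha^nG$, pushing the discrepancy into a complexity whose exponential growth rate is exactly $h_t(\beta)$. First I would apply Theorem~\ref{mass growth bounds entropy} to each of $\alpha\beta$ and $\alpha$, giving
\[
h_{\sigma,t}(\alpha\beta)=\limsup_{n\to\infty}\frac1n\log m_{\sigma,t}\!\big((\alpha\beta)^nG\big),\qquad h_{\sigma,t}(\alpha)=\limsup_{n\to\infty}\frac1n\log m_{\sigma,t}(\alpha^nG).
\]
Because $\alpha$ and $\beta$ commute we have $(\alpha\beta)^n=\alpha^n\beta^n$, so the object to be controlled is $\alpha^n(\beta^nG)$.

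The heart of the argument is the estimate
\[
m_{\sigma,t}(\alpha^n\beta^nG)\le m_{\sigma,t}(\alpha^nG)\,\delta_t(G,\beta^nG).
\]
To establish it I would first use the second inequality of Lemma~\ref{mass inequalities}, with $\alpha^nG$ playing the role of the generating object, to get $m_{\sigma,t}(\alpha^n\beta^nG)\le m_{\sigma,t}(\alpha^nG)\,\delta_t(\alpha^nG,\alpha^n\beta^nG)$; that is, I build $\alpha^n\beta^nG$ out of $\alpha^nG$. Writing $\alpha^n\beta^nG=\alpha^n(\beta^nG)$ and applying the functoriality bound of Lemma~\ref{complexity inequalities}(3) to the exact endofunctor $\alpha^n$ then gives $\delta_t(\alpha^nG,\alpha^n\beta^nG)\le\delta_t(G,\beta^nG)$. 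Taking logarithms, dividing by $n$, and passing to the limit superior yields
\[
h_{\sigma,t}(\alpha\beta)\le\limsup_{n\to\infty}\frac1n\log m_{\sigma,t}(\alpha^nG)+\lim_{n\to\infty}\frac1n\log\delta_t(G,\beta^nG)=h_{\sigma,t}(\alpha)+h_t(\beta),
\]
where splitting the limit superior is legitimate because the second sequence converges, its limit being the entropy $h_t(\beta)$ by definition. The degenerate case where $(\alpha\beta)^nG$ vanishes for large $n$ is harmless, since then the left-hand side is $-\infty$. This proves the first inequality.

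For the second inequality I would invoke the equality clause of Theorem~\ref{mass growth bounds entropy}: an algebraic stability condition in the component $\astabo{\Lambda}{\cat{D}}$ of $\sigma$ forces $h_t(\beta)=h_{\sigma,t}(\beta)$, and substituting this into the bound just proved gives $h_{\sigma,t}(\alpha\beta)\le h_{\sigma,t}(\alpha)+h_{\sigma,t}(\beta)$. The step I expect to need the most care is the bookkeeping of the direction of the complexity: one must arrange to build $\alpha^n\beta^nG$ \emph{from} $\alpha^nG$, so that after stripping $\alpha^n$ the surviving complexity is $\delta_t(G,\beta^nG)$, with $G$ as generator, matching the entropy $h_t(\beta)$; the reverse complexity $\delta_t(\beta^nG,G)$ would not do, since $\delta_t$ is not symmetric in general and only the former has growth rate $h_t(\beta)$. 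This asymmetry, together with the use of commutativity to write $(\alpha\beta)^n=\alpha^n\beta^n$, is the one genuinely delicate interaction in the proof.
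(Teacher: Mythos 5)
Your proposal is correct and follows essentially the same route as the paper's own proof: express $h_{\sigma,t}(\alpha\beta)$ and $h_{\sigma,t}(\alpha)$ via the split generator using Theorem \ref{mass growth bounds entropy}, write $(\alpha\beta)^n G = \alpha^n\beta^n G$ by commutativity, bound $m_{\sigma,t}(\alpha^n\beta^nG)\leq m_{\sigma,t}(\alpha^nG)\,\delta_t(\alpha^nG,\alpha^n\beta^nG)\leq m_{\sigma,t}(\alpha^nG)\,\delta_t(G,\beta^nG)$ via Lemmas \ref{mass inequalities} and \ref{complexity inequalities}(3), split the $\limsup$, and deduce the algebraic case from the equality clause of Theorem \ref{mass growth bounds entropy}. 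Your extra remarks on the direction of the complexity and on the degenerate (object-wise vanishing) case are sound refinements of the same argument, not a different approach.
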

\begin{proof}
By Theorem \ref{mass growth bounds entropy} and Lemmas \ref{complexity inequalities} and \ref{mass inequalities} we have
\begin{align*}
h_{\sigma,t}(\alpha\beta) &= \limsup_{n\to \infty} \frac{1}{n}\log m_{\sigma,t}\left( (\alpha\beta)^n G\right)
= \limsup_{n\to \infty} \frac{1}{n}\log m_{\sigma,t}\left( \alpha^n\beta^n G\right)\\
&\leq \limsup_{n\to \infty} \frac{1}{n}\log m_{\sigma,t}\left( \alpha^nG\right)\delta_t(\alpha^nG, \alpha^n\beta^nG)\\
&\leq \limsup_{n\to \infty} \frac{1}{n}\log m_{\sigma,t}\left( \alpha^nG\right)\delta_t(G, \beta^nG)\\
&\leq \limsup_{n\to \infty} \frac{1}{n}\log m_{\sigma,t}\left( \alpha^nG\right) + \limsup_{n\to \infty} \frac{1}{n}\log \delta_t(G,\beta^nG)\\
&\leq h_{\sigma,t}(\alpha)+ h_t(\beta).
\end{align*}
The last part follows directly from Theorem \ref{mass growth bounds entropy}.
\end{proof}

\section{Linear Bounds for Mass Growth and Entropy}
\label{linear bounds}

There are piecewise-linear lower and upper bounds for mass growth analogous to the bounds (\ref{PL entropy lower bound}) and (\ref{PL entropy upper bound}) for entropy. Let $\alpha$ be an exact endomorphism of $\cat{D}$, and $\sigma \in \astab{\Lambda}{\cat{D}}$ a stability condition. We assume that $\alpha$ is  not object-wise nilpotent, \ie there exists some object $E$ for which $\alpha^nE\neq 0$ for all $n\in \N$.
Define constants
\[
\phi^-_\sigma(\alpha) 
= \inf_{E\in \cat{D}}\liminf_{n\to \infty}\left( \frac{\phi_\sigma^-(\alpha^nE)}{n}\right) 
\leq \sup_{E\in \cat{D}} \limsup_{n\to \infty}\left( \frac{\phi_\sigma^+(\alpha^nE)}{n}\right) 
=\phi^+_\sigma(\alpha)
\]
with values in $[-\infty,\infty]$. Recall that by convention $\phi_\sigma^\pm(0)=-\infty$. If $\cat{D}$ has a split generator $G$ then 
\[
\phi^-_\sigma(\alpha) = \liminf_{n\to \infty}\left( \frac{ \phi_\sigma^-(\alpha^nG)}{n}\right) \qquad \text{and} \qquad \phi^+_\sigma(\alpha) = \limsup_{n\to \infty}\left( \frac{\phi_\sigma^+(\alpha^nG)}{n}\right).
\]
The next lemma gives a criterion for the existence of the limits (see below for an alternative criterion).
\begin{lemma}
\label{algebraic generator}
Suppose a component $\astabo{\Lambda}{\cat{D}}$ of the stability space contains an algebraic stability condition. Then $\cat{D}$ has a split generator $G$ such that the limits $\lim_{n\to \infty} \phi^\pm_\sigma(\alpha^nG)/n$ exist for any $\sigma \in \astabo{\Lambda}{\cat{D}}$ and  exact endofunctor $\alpha$ which is not object-wise nilpotent.
\end{lemma}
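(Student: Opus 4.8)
The plan is to take for $G$ the direct sum of the simple objects of the algebraic heart, to reduce the statement to the algebraic stability condition itself, and then to exhibit the sequences $\phi^\pm_\sigma(\alpha^n G)$ as sub-/super-additive up to a bounded error, so that Fekete's lemma supplies the limits. First I would fix an algebraic $\sigma_0 \in \astabo{\Lambda}{\cat{D}}$ with heart $\mathcal{A}$ and simple objects $S_1,\dots,S_r$, and set $G = \bigoplus_i S_i$. Since $\mathcal{A}$ is a length category the $S_i$ generate it under extensions, and since $\mathcal{A}$ is the heart of a bounded t--structure its shifts generate $\cat{D}$; hence $\thick{}{G} = \cat{D}$, so $G$ is a split generator chosen once and for all, independently of $\alpha$ and $\sigma$. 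I would also note that $\alpha^n G \neq 0$ for every $n$: otherwise the exact functor $\alpha^n$ would annihilate all of $\thick{}{G} = \cat{D}$, contradicting that $\alpha$ is not object-wise nilpotent, so each $\phi^\pm_{\sigma_0}(\alpha^n G)$ is a genuine real number.

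Next I would reduce to $\sigma = \sigma_0$. For any $\sigma$ in the same component the Bridgeland distance $\Delta := d(\sigma,\sigma_0)$ is finite, and the definition of $d$ gives $|\phi^\pm_\sigma(E) - \phi^\pm_{\sigma_0}(E)| \leq \Delta$ for every nonzero $E$. Applying this to $E = \alpha^n G$ and dividing by $n$ shows that $\phi^\pm_\sigma(\alpha^n G)/n$ and $\phi^\pm_{\sigma_0}(\alpha^n G)/n$ have identical limiting behaviour, so it suffices to prove the limits exist for $\sigma_0$.

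The heart of the matter is to relate phases to the cohomology functors $H^\bullet$ of the t--structure with heart $\mathcal{A}$. For nonzero $E$ let $a^-(E) \leq a^+(E)$ be the extreme degrees in which $H^\bullet(E)$ is nonzero. Because every object of $\mathcal{A}$ has $\sigma_0$-phase in $(0,1]$, the cohomological filtration exhibits $E$ as a finite iterated extension of shifts $S_i[-d]$ with $d \in [a^-(E), a^+(E)]$, and yields $\phi^+_{\sigma_0}(E) + a^-(E) \in (0,1]$ together with $\phi^-_{\sigma_0}(E) + a^+(E) \in (0,1]$. Applying the exact functor $\alpha^m$ to the cohomological filtration of $\alpha^n G$ then presents $\alpha^{m+n}G$ as an iterated extension of the objects $(\alpha^m S_i)[-d]$ with $d \in [a^-(\alpha^n G), a^+(\alpha^n G)]$. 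Using $\phi^+(B) \leq \max(\phi^+(A),\phi^+(C))$ across a triangle, the fact that each $\alpha^m S_i$ is a summand of $\alpha^m G$, and the bounds above, I obtain
\[
\phi^+_{\sigma_0}(\alpha^{m+n}G) \leq \phi^+_{\sigma_0}(\alpha^m G) - a^-(\alpha^n G) \leq \phi^+_{\sigma_0}(\alpha^m G) + \phi^+_{\sigma_0}(\alpha^n G),
\]
and symmetrically $\phi^-_{\sigma_0}(\alpha^{m+n}G) \geq \phi^-_{\sigma_0}(\alpha^m G) + \phi^-_{\sigma_0}(\alpha^n G) - 1$.

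Finally, the first sequence is subadditive and the second becomes superadditive after subtracting the constant $1$, so Fekete's lemma yields $\lim_n \phi^+_{\sigma_0}(\alpha^n G)/n$ and $\lim_n \phi^-_{\sigma_0}(\alpha^n G)/n$ (finite, since $\phi^-_{\sigma_0} \leq \phi^+_{\sigma_0}$ forces matching linear bounds above and below); combined with the reduction step this gives the limits for every $\sigma$. I expect the main obstacle to be the bookkeeping in the third step: checking that applying $\alpha^m$ to the cohomological filtration genuinely realises $\alpha^{m+n}G$ as an extension of shifts of the $\alpha^m S_i$ in exactly the degree range controlled by $a^\pm(\alpha^n G)$, and pinning down the additive constant relating $\phi^\pm_{\sigma_0}$ to cohomological amplitude. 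Everything else is either formal or reduces to the standard triangle inequalities for $\phi^\pm$.
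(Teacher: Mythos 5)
Your proposal is correct and follows essentially the same route as the paper: the same split generator built from the simples of the algebraic heart, sub-/super-additivity of $\phi^{\pm}_{\sigma_0}(\alpha^n G)$ in $n$, and Fekete's lemma. The only difference is technical rather than conceptual: the paper first normalizes the algebraic stability condition so that all heart objects have phase $1$, which makes the phase sequences exactly sub-/super-additive via a split-closure characterisation of $\phi^{\pm}$, whereas you keep arbitrary phases, relate $\phi^{\pm}$ to cohomological amplitude up to an error in $(0,1]$, and absorb that bounded discrepancy into the constant that Fekete's lemma tolerates (your explicit reduction from general $\sigma$ to $\sigma_0$ via finiteness of the Bridgeland distance on a component is left implicit in the paper).
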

\begin{proof}
Suppose $\sigma\in \astabo{\Lambda}{\cat{D}}$ is algebraic. Without loss of generality we may assume that all objects in its heart have phase $1$. Set $G=\oplus_{i\in I} S_i[-1]$ where $\{ S_i \mid i\in I\}$ is a set representatives of the isomorphism classes of simple objects in the heart. We make this choice so that $G$ is semistable of phase $0$. Since every object in the heart has a finite composition series $G$ is a split generator for the abelian heart. Then, since every object $E\in \cat{D}$ has a finite filtration whose factors are the cohomology groups $H^*(E)$ with respect to the t--structure, $G$ is a split generator for $\cat{D}$. 

For a collection of objects $A\subset \cat{D}$ let $\spclos{A}$ be the full subcategory on those $E\in \cat{D}$ such that $E\oplus F$ is isomorphic to an object in the extension-closure of $A$ for some $F\in \cat{D}$. Since all objects have integral phases we then have
\begin{align*}
\phi^+_\sigma(E) &= \min \{ d \in \Z \mid E \in \spclos{ G[n] \colon n\leq d} \}\\
\text{and} \ \phi^-_\sigma(E) &= \max\{ d \in \Z \mid E \in \spclos{ G[n] \colon n\geq d} \}
\end{align*}
so that $E \in \spclos{ G[n] \colon \phi^-_\sigma(E) \leq n \leq \phi^+_\sigma(E) }$ for any $E\in \cat{D}$. In particular
 \begin{align*}
\alpha^{m+n} G 
&\in \spclos{ \alpha^nG[n] \colon \phi^-_\sigma(\alpha^mG) \leq n \leq \phi^+_\sigma(\alpha^mG) }\\
& \subset \spclos{ G[n] \colon \phi^-_\sigma(\alpha^mG)+\phi^-_\sigma(\alpha^nG) \leq n \leq \phi^+_\sigma(\alpha^mG) +\phi^+_\sigma(\alpha^nG) }.
\end{align*}
It follows from this and the above characterisations of $\phi^\pm_\sigma$ that $\phi^-_\sigma(\alpha^nG)$ is superadditive and  $ \phi^+_\sigma(\alpha^nG)$ subadditive in $n\in \N$. Therefore Fekete's Lemma shows that the limits
$\lim_{n\to \infty} \phi^\pm_\sigma(\alpha^nG) / n$
exist, and are given by $\sup_{n\geq 1} \phi^-_\sigma(\alpha^nG)/n$ and $\inf_{n\geq 1}  \phi^+_\sigma(\alpha^nG)/n$ respectively.
\end{proof}

The bounds in the next result were obtained by Fan and Filip \cite[Theorem 2.2.6]{https://doi.org/10.48550/arxiv.2008.06159} under the assumption that $\cat{D}$ has a split generator $G$ and admits a Serre functor. This ensures that
\[
\phi^\pm_\sigma(\alpha) = \lim_{n\to \infty}\left( \frac{\phi_\sigma^\pm(\alpha^nG)}{n} \right) = \tau^\pm(\alpha)
\]
are the shifting numbers, and so in particular are independent of $\sigma$. However, the bounds on mass growth hold without these assumptions on $\cat{D}$, as can be seen by examining their proof.
\begin{proposition}
\label{basic bounds}
Let $\alpha$ be an exact endomorphism which is not object-wise nilpotent. For any $\sigma \in \astab{\Lambda}{\cat{D}}$ there are bounds
\begin{align*}
\phi_\sigma^-(\alpha)\, t  \leq h_{\sigma,t}(\alpha)  \leq h_{\sigma,0}(\alpha)  + \phi_\sigma^-(\alpha)\, t & \qquad \text{for}\ t\leq 0 \ \text{and}\\
\phi_\sigma^+(\alpha)\, t  \leq h_{\sigma,t}(\alpha) \leq h_{\sigma,0}(\alpha)  + \phi_\sigma^+(\alpha)\, t & \qquad \text{for}\ t\geq 0.
\end{align*}
 In particular, if $h_{\sigma,0}(\alpha) =0$ then 
\[
h_{\sigma,t}(\alpha)  =
\begin{cases}
\phi_\sigma^- (\alpha)\,t & t\leq 0\\
\phi_\sigma^+(\alpha)\, t & t\geq 0
\end{cases}
\]
is piecewise-linear. Moroever, when $\cat{D}$ has a split generator and either $\cat{D}$ admits a Serre functor or there is an algebraic stability condition in the component of $\sigma$ then the lower bounds can be sharpened to
\begin{align*}
 \max\{ \phi_\sigma^-(\alpha)t,\ h_{\sigma,0}(\alpha) + \phi^+(\alpha)t \} \leq  h_{\sigma,t}(\alpha)& 
 \qquad \text{for}\ t\leq 0\ \text{and}\\
  \max\{ \phi_\sigma^+(\alpha)t,\ h_{\sigma,0}(\alpha) + \phi^-(\alpha)t \} \leq  h_{\sigma,t}(\alpha)  & 
 \qquad \text{for}\ t\geq 0.
\end{align*}
In particular, the mass growth is linear if and only if $\phi^-_\sigma(\alpha)=\phi^+_\sigma(\alpha)$ in which case
\[
h_{\sigma,t}(\alpha) = h_{\sigma,0}(\alpha) + \phi_\sigma(\alpha)t
\]
where $\phi_\sigma(\alpha)$ is the common value. Both sets of bounds are illustrated in Figure \ref{bounds figure}.
\end{proposition}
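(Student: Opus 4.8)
The plan is to derive all four bounds from a single elementary observation about the mass with parameter. For any nonzero $E$ with Harder--Narasimhan factors $A_i\in P(\phi_i)$ one has $\phi_\sigma^-(E)\le\phi_i\le\phi_\sigma^+(E)$, and since $\phi\mapsto e^{\phi t}$ is increasing for $t\ge0$ and decreasing for $t\le0$, summing $m_\sigma(A_i)e^{\phi_i t}$ gives
\[
m_\sigma(E)\,e^{\phi_\sigma^{\mp}(E)t}\ \le\ m_{\sigma,t}(E)\ \le\ m_\sigma(E)\,e^{\phi_\sigma^{\pm}(E)t}
\]
(upper signs for $t\ge0$, lower signs for $t\le0$), where $m_\sigma(E)=m_{\sigma,0}(E)$. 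Retaining only the extremal Harder--Narasimhan term yields the cruder lower estimate $m_{\sigma,t}(E)\ge m_\sigma\,e^{\phi_\sigma^{\pm}(E)t}$ (top term for $t\ge0$, bottom term for $t\le0$), where $m_\sigma>0$ is the infimal mass supplied by the support property. I would treat $t\ge0$ throughout and obtain the $t\le0$ statements by the symmetric argument exchanging $\phi_\sigma^+\leftrightarrow\phi_\sigma^-$.

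For the basic upper bound, apply the upper estimate to $\alpha^nE$, take $\tfrac1n\log$, and use subadditivity of $\limsup$ together with $\sup_E(f+g)\le\sup_Ef+\sup_Eg$; since $t\ge0$ the phase term contributes $t\,\phi_\sigma^+(\alpha)$ and the mass term contributes $h_{\sigma,0}(\alpha)$, giving $h_{\sigma,t}(\alpha)\le h_{\sigma,0}(\alpha)+t\,\phi_\sigma^+(\alpha)$. For the basic lower bound, apply the crude lower estimate to $\alpha^nE$: the factor $\tfrac1n\log m_\sigma\to0$, so $\limsup_n\tfrac1n\log m_{\sigma,t}(\alpha^nE)\ge t\limsup_n\phi_\sigma^+(\alpha^nE)/n$, and taking $\sup_E$ (with $t\ge0$) gives $h_{\sigma,t}(\alpha)\ge t\,\phi_\sigma^+(\alpha)$. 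The piecewise-linear statement when $h_{\sigma,0}(\alpha)=0$ is then immediate, the upper and lower bounds coinciding on each half-line.

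For the sharpened lower bounds I would switch to a split generator $G$ and invoke Theorem~\ref{mass growth bounds entropy}, which gives $h_{\sigma,t}(\alpha)=\limsup_n\tfrac1n\log m_{\sigma,t}(\alpha^nG)$. Using instead the \emph{full}-mass lower estimate $m_{\sigma,t}(\alpha^nG)\ge m_{\sigma,0}(\alpha^nG)\,e^{\phi_\sigma^-(\alpha^nG)t}$ (for $t\ge0$) yields
\[
\tfrac1n\log m_{\sigma,t}(\alpha^nG)\ \ge\ \tfrac1n\log m_{\sigma,0}(\alpha^nG)\ +\ t\,\frac{\phi_\sigma^-(\alpha^nG)}{n}.
\]
Here is the crux: under the stated hypotheses (a Serre functor, via Fan--Filip, or an algebraic stability condition in the component, via Lemma~\ref{algebraic generator}) the sequence $\phi_\sigma^-(\alpha^nG)/n$ \emph{converges} to $\phi_\sigma^-(\alpha)$, so the phase term converges and $\limsup_n(a_n+b_n)=\lim_n a_n+\limsup_n b_n$; the mass term then contributes exactly $h_{\sigma,0}(\alpha)$, giving $h_{\sigma,t}(\alpha)\ge h_{\sigma,0}(\alpha)+\phi_\sigma^-(\alpha)t$. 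Together with the basic lower bound this produces the advertised maximum. The main obstacle is precisely this convergence: with only a $\liminf$/$\limsup$ the cross term would break and one would recover merely the basic bound, which is why the sharpening requires the hypotheses and does not hold in general.

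Finally, for the linearity criterion I would argue both directions. If $\phi_\sigma^-(\alpha)=\phi_\sigma^+(\alpha)=\phi_\sigma(\alpha)$, then on each half-line the sharpened lower bound and the basic upper bound both read $h_{\sigma,0}(\alpha)+\phi_\sigma(\alpha)t$, forcing $h_{\sigma,t}(\alpha)=h_{\sigma,0}(\alpha)+\phi_\sigma(\alpha)t$. Conversely, if $h_{\sigma,t}(\alpha)=h_{\sigma,0}(\alpha)+ct$ is linear, dividing the basic bounds by $t$ and letting $t\to+\infty$ pins $c=\phi_\sigma^+(\alpha)$, while $t\to-\infty$ pins $c=\phi_\sigma^-(\alpha)$, so the two rates agree; note this direction uses only the basic bounds.
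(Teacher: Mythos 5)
Your proposal is correct and takes essentially the same route as the paper's own proof: the same elementary estimates bounding $m_{\sigma,t}(E)$ via $m_\sigma$, $m_\sigma(E)$ and $\phi^\pm_\sigma(E)$, the same passage to a split generator via Theorem~\ref{mass growth bounds entropy} together with Lemma~\ref{algebraic generator} (or Fan--Filip) for the existence of $\lim_{n\to\infty}\phi^\pm_\sigma(\alpha^nG)/n$, and the same two-directional argument for the linearity criterion. One side remark of yours is slightly off, though it does not affect the proof: since $\limsup_n(a_n+b_n)\ge\limsup_n a_n+\liminf_n b_n$ always holds, and $\phi^-_\sigma(\alpha)=\liminf_n\phi^-_\sigma(\alpha^nG)/n$ for any split generator, the cross term would not actually ``break'' without convergence --- the hypotheses are used here only to have the limits exist, exactly as in the paper.
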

\begin{proof}
The initial bounds follow from the fact that for any $0\neq E\in \cat{D}$  we have inequalities
\begin{align*}
m_\sigma \exp\left( \phi_\sigma^-(E)\, t \right) \leq m_{\sigma,t}(E) \leq m_\sigma(E) \exp\left( \phi_\sigma^-(E)\, t\right) & \qquad \text{for}\ t\leq 0 \ \text{and}\\
m_\sigma \exp\left( \phi_\sigma^+(E)\, t \right) \leq m_{\sigma,t}(E) \leq m_\sigma(E) \exp\left( \phi_\sigma^+(E)\, t\right) &  \qquad \text{for}\ t\geq 0
\end{align*}
where $m_\sigma>0$ is the infimal mass of the non-zero objects. The last assertion follows immediately from the bounds. 

Now suppose $\cat{D}$ has a split generator and either $\cat{D}$ admits a Serre functor or there is an algebraic stability condition in the component of $\sigma$. Then by Lemma \ref{algebraic generator} or by \cite[Theorem 2.2.6]{https://doi.org/10.48550/arxiv.2008.06159} there is a split generator $G$  such that  the limits $\lim_{n\to \infty} \phi_\sigma^\pm(\alpha^nG)/n$ exist. For any $E\in \cat{D}$ there are inequalities 
\[
m_{\sigma}(E) \exp\left( \phi_\sigma^+(E)\, t \right) \leq m_{\sigma,t}(E)
\]
 for $t\leq 0$ and  $m_{\sigma}(E) \exp\left( \phi_\sigma^-(E)\, t \right) \leq m_{\sigma,t}(E)$ for $t\geq 0$. Applying these with $E=\alpha^n G$ we obtain lower bounds
\begin{align*}
h_{\sigma,0}(\alpha) + \phi^+(\alpha)t \leq  h_{\sigma,t}(\alpha) & \qquad \text{for}\ t\leq 0 \ \text{and}\\
h_{\sigma,0}(\alpha) + \phi^-(\alpha)t \leq  h_{\sigma,t}(\alpha) & \qquad \text{for}\ t\geq 0.
\end{align*}
Combining these with   the first part we obtain the sharper lower bounds. (Alternatively, when there is an algebraic stability condition in the component of $\sigma$ the sharper bounds follow from the convexity of the entropy since the mass growth and entropy agree.)

For the last part, if the mass growth is linear then (either) pair of bounds imply its slope is $\phi^-_\sigma(\alpha)=\phi^+_\sigma(\alpha)$. Conversely if $\phi^-_\sigma(\alpha)=\phi^+_\sigma(\alpha)$ then the sharper bounds imply that $h_{\sigma,t}(\alpha) = h_{\sigma,0}(\alpha) + \phi_\sigma(\alpha)t$  is linear, of slope the common value.
\end{proof}

\begin{figure}

\begin{tikzpicture}
\fill[red!60, opacity=0.5] (-3,-1.5)--(-3,-0.5)--(2,2)--(0,0)--cycle;
\fill[red!60, opacity=0.5] (-2,-1)--(2,3)--(3,3)--(0,0)--cycle;
\draw[->] (-3,0)--(3,0);
\draw[->] (0,-2)--(0,3);
\node at (2.9,0.35) {$t$};
\node at (0.25,2.75) {$y$};
\draw[red] (-3,-1.5)--(0,0)--(3,3);
\draw[red] (-3,-0.5)--(0,1)--(2,3);
\node at (3.25,2) {$y=\phi_\sigma^+(\alpha) t$};
\node at (-1.5,-1.5) {$y=\phi_\sigma^-(\alpha) t$};
\node at (0.5,-0.3) {$\left(0,0\right)$};
\draw[red, fill=red] (0,0) circle (0.05);
\node at (-1,1.25) {$\left(0,h_{\sigma,0}(\alpha)\right)$};
\draw[red, fill=red] (0,1) circle (0.05);
\end{tikzpicture}
\caption{The mass-growth $y=h_{\sigma,t}(\alpha)$ lies in the red shaded region, and in the pale red shaded region when there is an algebraic stability condition in the component of $\sigma$ --- see Proposition \ref{basic bounds}.}
\label{bounds figure}
\end{figure}

\begin{remark}
These bounds are compatible with the properties in Lemma \ref{basic properties of mass growth}:
\begin{enumerate}
\item $\phi^\pm_\sigma(\alpha^k)=k\phi^\pm_\sigma(\alpha)$ and $h_{\sigma,0}(\alpha^k)=kh_{\sigma,0}(\alpha)$ for any $k\in \N$,
\item $\phi^\pm_\sigma(\alpha[d])=\phi^\pm_\sigma(\alpha)+d$ and $h_{\sigma,0}(\alpha[d])=h_{\sigma,0}(\alpha)$ for any $d\in \Z$ and
\item $\phi^\pm_\sigma(\alpha^{-1}\beta\alpha)=\phi^\pm_\sigma(\beta)$ and $h_{\sigma,0}(\alpha^{-1}\beta\alpha)=h_{\sigma,0}(\beta)$ for any exact auto-equivalence $\alpha$ and endofunctor $\beta$.
\end{enumerate}
\end{remark}

\begin{remark}[Asymptotic mass growth]
\label{asymptotics}
It follows from Proposition \ref{basic bounds} that $\lim_{t \to \pm \infty} \left( h_{\sigma,t}(\alpha) / t \right)= \phi^\pm_\sigma(\alpha)$ and hence by Proposition \ref{def inv} that these limits depend only on the component of $\sigma$ in $\astab{\Lambda}{\cat{D}}$. When $\cat{D}$ has a split generator and admits a Serre functor they are entirely independent of $\sigma$ by \cite[Theorem 2.2.6]{https://doi.org/10.48550/arxiv.2008.06159}.
\end{remark}
%\begin{remark}
%Suppose $G$ is a split-generator for $\cat{D}$ and $\alpha$ an exact endofunctor which is not object-wise nilpotent. Then  for any stability condition $\sigma \in \astab{\Lambda}{\cat{D}}$ there are lower bounds for the entropy
%\[
%h_t(\alpha) \geq
%\begin{cases}
%\phi_\sigma^-(\alpha)t & \text{for}\ t\leq 0\\
%\phi_\sigma^+(\alpha)t & \text{for}\ t\geq 0
%\end{cases}
%\]
%because $h_t(\alpha) \geq h_{\sigma,t}(\alpha)$ by Theorem \ref{mass growth bounds entropy}. If in addition $\sigma$ lies in a component of the stability space containing an algebraic stability condition then $h_t(\alpha)=h_{\sigma,t}(\alpha)$ so that the entropy satisfies the sharper lower bounds and the upper bounds in Proposition \ref{basic bounds}. In particular, in this case the entropy is piecewise linear when $h_0(\alpha)=0$, and linear when $\phi_\sigma^-(\alpha) = \phi_\sigma^+(\alpha)$.
%\end{remark}
 \begin{example}[Serre dimensions]
 \label{serre dim}
The lower and upper Serre dimensions of a {$k$-linear} Ext-finite triangulated category $\cat{D}$ with Serre functor are defined in \cite[\S 5]{Elagin2019ThreeNO}.  When $\cat{D}$ is the perfect derived category of dg-modules over a smooth and proper dg-algebra these are the limiting slopes
\[
\underline{\textrm{Sdim}}(\cat{D}) = \lim_{t\to -\infty} h_t(S) / t \qquad \text{and} \qquad \overline{\textrm{Sdim}}(\cat{D}) = \lim_{t\to \infty} h_t(S) / t
\]
of the entropy of the Serre functor $S$  \cite[Definition 2.4]{Kikuta_2021}. Furthermore \cite[Lemma 3.11]{Kikuta_2021} and Proposition \ref{basic bounds} show that
 \[
 \lim_{t\to \pm\infty} \frac{h_t(S) }{ t} = \lim_{t\to \pm\infty} \frac{h_{\sigma,t}(S) }{ t} = \phi_\sigma^\pm(S).
 \]
 \end{example}

\begin{example}[Semisimple categories]
\label{semisimple}

Semisimple categories are a convenient class of examples in which the mass growth of all endofunctors is known, and the bounds of Proposition \ref{basic bounds} easily computed.

Let $F$ be a finite set and $\cat{D}^b(F)$ be the triangulated category of $F$-indexed bounded complexes over a field $k$. This has split-generator $k^F$ considered as a complex in degree zero. Any exact endofunctor $\alpha$ of $\cat{D}^b(F)$ is a  Fourier--Mukai functor given by a kernel $K_\alpha \in D^b(F\times F)$ with trivial differential. Such a kernel corresponds to the matrix $M_\alpha(z) \in M_{F\times F}\left( \N[z,z^{-1}] \right)$ whose entries are the Poincar\'e polynomials of the components of the kernel. The endofunctor $\alpha$ is object-wise nilpotent if and only if $M_\alpha(z) $ is nilpotent; we assume this is not the case. Composition of functors corresponds to composition of matrices. By \cite[\S2.4]{dhkk} and Theorem \ref{mass growth bounds entropy} the mass growth
\[
h_{\sigma,t}(\alpha) = h_t(\alpha) = \log \rho\left( M_\alpha(e^{-t})\right) = \lim_{n\to \infty}\frac{1}{n} \log || M_\alpha(e^{-t})^n||
\]
where $\rho$ denotes the spectral radius, and the final equality is Gelfand's formula for the spectral radius where $||\cdot||$ is (any) matrix norm. Perron--Frobenius theory gives a more explicit description of $\rho\left( M_\alpha(e^{-t})\right)$ as the maximal real eigenvalue of $M_\alpha(e^{-t})$, so that the graph is the maximal branch of the (real) spectral curve
\[
\left\{ (t,\lambda) \in \R^2\mid \det\left( M_\alpha(e^{-t}) - \lambda \right)=0 \right\},
\]
see \cite[\S2.4]{dhkk}. In particular, $h_0(\alpha) = \log \rho\left( M_\alpha(1) \right)$ is the logarithm of the maximal real eigenvalue of $M_\alpha(1)$. 

When $|F|=1$ the endofunctor is given by a Laurent polynomial $f\in \N[z,z^{-1}]$ and $h_t(\alpha) = \log f(e^{-t})$. As predicted by Proposition \ref{basic bounds} this is (piecewise) linear when $h_0(\alpha)=0$ for in this case $f(z)=z^d$ must be a monic monomial, in which case $\alpha=[-d]$ and $h_t(\alpha) = -dt$. In fact it is (piecewise) linear if and only if $h_0(\alpha)=0$. Moreover, there are examples such as $M_\alpha(z) = (z+1/z)$  for which 
\[
t \longmapsto 
\begin{cases}
\epsilon + \phi_\sigma^-(\alpha) t & t\leq 0\\
\epsilon + \phi_\sigma^+(\alpha) t & t>0
\end{cases}
\]
is not a lower bound for $h_t(\alpha) = \log (e^{-t}+e^t)$ for any $\epsilon>0$. In this sense, Proposition \ref{basic bounds} is the best possible result.

Returning to the case of arbitrary $F$, let $z^d$ be the lowest power of $z$ occurring in the entries of $M_\alpha(z)$ and set $M_\alpha(z)=z^dM'_\alpha(z)$. Then
\[
 \phi_\sigma^+(\alpha) = \lim_{t\to \infty} \frac{h_t(\alpha)}{t} 
 = \lim_{t\to \infty} \frac{\log \rho\left( M_\alpha(e^{-t}) \right)}{t}\\
 =\lim_{t\to \infty} \frac{\log \rho\left( M'_\alpha(e^{-t}) \right)}{t} -d\\
 =-d
\]
because $\rho\left( M'_\alpha(e^{-t}) \right)$ is bounded as $t\to \infty$. Similarly $ \phi_\sigma^-(\alpha) =-D$ where $z^D$ is the highest power of $z$ occurring in the entries of $M_\alpha(z)$.
\end{example}

\begin{examples}
In the following examples $h_0(\alpha)=0$ and therefore $h_{\sigma,0}(\alpha)=0$ too for any $\sigma\in \astab{\Lambda}{X}$. In these cases Proposition \ref{basic bounds} guarantees that $h_{\sigma,t}(\alpha)$ is piecewise-linear.
\label{entropy examples}
\begin{enumerate}
\item For any triangulated category $h_0([n])=0$. In this case $h_{\sigma,t}([n]) = nt$. 
\item For a Dynkin quiver $Q$ and any $\alpha \in \aut{}{\cat{D}^b(Q)}$ the entropy  $h_0(\alpha)=0$ by \cite[Corollary 2.15]{Kikuta2018ANO}. The group of auto-equivalences $\aut{}{\cat{D}^b(Q)}$ is generated by the Serre functor, shifts and auto-equivalences of the quiver $Q$ \cite[Theorem 3.8]{miyachi-yekutieli}, and it follows that $h_{\sigma,t}(\alpha)$ is always linear. 
\item\label{non-CY entropy vanishing} Let $X$ be a smooth complex projective variety with either ample canonical or ample anti-canonical bundle, and let $\alpha \in \aut{}{\cat{D}^b(X)}$. Then $h_0(\alpha)=0$ by \cite[Theorem 5.7]{10.1093/imrn/rnx131}.
\item Let $Q$ be a connected acyclic quiver with at least two vertices, and let $\cat{D}^N(Q)$ be the associated Calabi--Yau category of dimension $N\geq 2$, see \cite{KellerVandenBergh+2011+125+180}. Then
\[
h_{\sigma,t}(\Phi) = 
\begin{cases}
(1-N)t & t <0\\
0 & t\geq 0
\end{cases}
\]
where $\Phi\in\aut{}{\cat{D}^N(Q)}$ is the spherical twist about one of the simple objects in the standard heart \cite[Proposition 4.5]{ikeda_2021}.
\end{enumerate}
\end{examples}

Proposition \ref{basic bounds} implies that $\phi_\sigma^\pm(\alpha)=0$ when  $h_{\sigma,t}(\alpha)$ is constant. In the other direction, if $\phi_\sigma^\pm(\alpha)=0$ then $0 \leq h_{\sigma,t}(\alpha) \leq h_{\sigma,0}(\alpha)$. The next result provides a sufficient condition for the constancy of the mass growth with parameter, in the same spirit as \cite[Lemma 2.10]{dhkk} for the  entropy. 
\begin{lemma}
Suppose $\alpha$ preserves the heart $P(0,1]$ of the stability condition $\sigma$. Then $h_{\sigma,t}(\alpha) = h_{\sigma,0}(\alpha)$ is constant.
\end{lemma}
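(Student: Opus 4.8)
The plan is to exploit the freedom, recorded in the definition of mass growth, to compute $h_{\sigma,t}(\alpha)$ as a supremum over semistable objects of phase in $(0,1]$. Every such object lies in the heart $\mathcal{A} = P(0,1]$, and since by hypothesis $\alpha\mathcal{A}\subseteq\mathcal{A}$, all of its iterates $\alpha^n E$ remain in $\mathcal{A}$. The key observation is that every object of $\mathcal{A}$ has its Harder--Narasimhan factors concentrated in phases $(0,1]$, which forces $m_{\sigma,t}$ and $m_{\sigma,0}=m_\sigma$ to differ only by a multiplicative constant that is bounded independently of the object.

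First I would record the elementary estimate: if $F\in\mathcal{A}$ has Harder--Narasimhan factors $A_i\in P(\phi_i)$ with $\phi_i\in(0,1]$, then since $0\leq\phi_i\leq 1$ we have $e^{-|t|}\leq e^{\phi_i t}\leq e^{|t|}$ for every $t\in\R$, whence
\[
e^{-|t|}\, m_\sigma(F) \;\leq\; m_{\sigma,t}(F) \;\leq\; e^{|t|}\, m_\sigma(F),
\]
using that $m_\sigma(F)=m_{\sigma,0}(F)$. Crucially the factor $e^{|t|}$ depends only on $t$, not on $F$.

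Next I would apply this with $F=\alpha^n E$, where $E$ is semistable of phase in $(0,1]$, so that $\alpha^n E\in\mathcal{A}$ for all $n$. Taking $\tfrac1n\log$, letting $n\to\infty$, and observing that $\pm\tfrac{|t|}{n}\to 0$, the two-sided bound collapses to
\[
\limsup_{n\to\infty}\tfrac1n\log m_{\sigma,t}(\alpha^n E) \;=\; \limsup_{n\to\infty}\tfrac1n\log m_{\sigma,0}(\alpha^n E)
\]
for every such $E$; the convention $\log 0=-\infty$ handles vanishing iterates, which occur only in the object-wise nilpotent case, where both sides are $-\infty$. Taking the supremum over all semistable $E$ of phase in $(0,1]$ then yields $h_{\sigma,t}(\alpha)=h_{\sigma,0}(\alpha)$ for every $t$.

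I do not expect a serious obstacle here; the argument is a uniform comparison of $m_{\sigma,t}$ with $m_{\sigma,0}$. The one point requiring care is to restrict the defining supremum to semistable objects of phase in $(0,1]$, for it is precisely this restriction --- together with the $\alpha$-invariance of $\mathcal{A}$ --- that keeps every iterate inside the heart, hence keeps all relevant phases inside $(0,1]$, and thereby bounds the ratio $m_{\sigma,t}/m_{\sigma,0}$ independently both of the object and of $n$.
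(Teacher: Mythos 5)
Your proof is correct and is essentially the paper's argument: both rest on the same uniform-in-$n$ estimate $m_\sigma(F)e^{-c|t|}\leq m_{\sigma,t}(F)\leq m_\sigma(F)e^{c|t|}$ for objects whose Harder--Narasimhan phases stay in an $\alpha$-invariant window, so the exponential factor dies under $\tfrac1n\log$. The only (cosmetic) difference is that you restrict the supremum to semistable objects in the heart, giving a constant uniform in the object, whereas the paper takes an arbitrary object $E\in P(-d,d]$ and lets the constant $e^{d|t|}$ depend on $E$ but not on $n$; your aside that vanishing iterates occur only for object-wise nilpotent $\alpha$ is inaccurate but harmless, since for any particular $E$ with $\alpha^nE=0$ eventually, both limsups are $-\infty$.
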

\begin{proof}
The existence of Harder--Narasimhan filtrations implies that any object $E$ lies in $P(-d,d]$ for some $d\in \N$. Since $\alpha$ preserves $P(0,1]$ we see that $\alpha^nE$ is also in $P(-d,d]$. Hence $m_\sigma(\alpha^nE) e^{-d|t|} \leq m_{\sigma,t}(E) \leq m_\sigma(\alpha^nE) e^{d|t|}$. The result then follows directly from the definition of $h_{\sigma,t}(\alpha)$.
\end{proof}
\begin{remark}
By the deformation invariance of the mass growth the same conclusion holds if $\alpha$ preserves the heart  of any stability condition $\tau$ in the same component of $\astab{\Lambda}{\cat{D}}$ as $\sigma$. By a simple adaptation of the argument it remains true if $\alpha$ preserves $P_\tau(I)$ for any interval $I\subset \R$ of strictly positive length.
\end{remark}

\begin{example}
Suppose $A$ is an abelian category and $\alpha \in \cat{D}^b(A)$ is induced from an exact endomorphism of $A$. Then $\alpha$ preserves the canonical heart and hence $h_{\sigma,t}(\alpha)$ is constant for any $\sigma$ in the (possibly empty) component of the space of stability conditions on $\cat{D}^b(A)$ containing those with heart $A$.
\end{example}

\section{Mass Growth of Auto-equivalences}
\label{auto-equivalences}

We relate the mass growth of an exact {\em auto-equivalence} to the properties of its action on the space of stability conditions, and on the quotient of this by $\C$.

\subsection{Translation length and eventual displacement}
We recall the definition and basic properties of the translation length and a related quantity which we refer to as the {\em eventual displacement} of an isometry.
\begin{definition}[Translation length and eventual displacement]
 Let $(X,d)$ be a metric space and $\alpha \colon X \to X$ an isometry. Then the \defn{translation length} of $\alpha$ is $l(\alpha) = \inf_{x\in X} d(x,\alpha x)$, and the \defn{eventual displacement} is 
 \[
 d(\alpha) = \lim_{n\to \infty} \frac{d(x,\alpha^nx)}{n}.
 \]
\end{definition}
\begin{lemma}
\label{eventual displacement lemma}
For an isometry $\alpha$ of a metric space $(X,d)$ the eventual displacement $d(\alpha)$ is well-defined, independent of the point $x\in X$, and satisfies $d(\alpha) \leq l(\alpha)$.
\end{lemma}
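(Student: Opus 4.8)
The plan is to deduce all three assertions from the subadditivity of the sequence $a_n = d(x,\alpha^n x)$ for a fixed basepoint $x$. First I would establish that subadditivity. Since $\alpha$ is an isometry, so is every power $\alpha^m$, and hence $d(\alpha^m x,\alpha^{m+n}x) = d(x,\alpha^n x)$. Combining this with the triangle inequality gives
\[
a_{m+n} = d(x,\alpha^{m+n}x) \leq d(x,\alpha^m x) + d(\alpha^m x,\alpha^{m+n}x) = a_m + a_n.
\]
Fekete's Lemma (already invoked in the proof of Lemma \ref{algebraic generator}) then guarantees that $a_n/n$ converges, with $d(\alpha) = \lim_{n\to\infty} a_n/n = \inf_{n\geq 1} a_n/n$. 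This settles that the defining limit exists for each fixed $x$.

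Next I would prove independence of the basepoint. For $x,y\in X$, using that $\alpha^n$ is an isometry (so $d(\alpha^n x,\alpha^n y)=d(x,y)$), two applications of the triangle inequality yield
\[
d(y,\alpha^n y) \leq d(y,x) + d(x,\alpha^n x) + d(\alpha^n x,\alpha^n y) = d(x,\alpha^n x) + 2\,d(x,y),
\]
and the symmetric inequality with $x$ and $y$ interchanged. Dividing by $n$ and letting $n\to\infty$, the term $2\,d(x,y)/n$ vanishes, so the two limits coincide; hence $d(\alpha)$ is independent of $x$.

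Finally, for the bound $d(\alpha)\leq l(\alpha)$ I would telescope along the orbit: for any $x$,
\[
d(x,\alpha^n x) \leq \sum_{k=0}^{n-1} d(\alpha^k x,\alpha^{k+1}x) = n\,d(x,\alpha x),
\]
again because each $\alpha^k$ preserves distances. Dividing by $n$ and passing to the limit gives $d(\alpha)\leq d(x,\alpha x)$; since $d(\alpha)$ does not depend on $x$, taking the infimum over $x\in X$ produces $d(\alpha)\leq \inf_{x\in X} d(x,\alpha x) = l(\alpha)$.

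There is no serious obstacle here, as the argument is entirely elementary. The single point that must be applied consistently throughout is the isometry property in the form $d(\alpha^k x,\alpha^k y)=d(x,y)$, which is precisely what converts each triangle inequality into the subadditivity, basepoint-independence, and telescoping estimates above.
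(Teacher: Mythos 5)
Your proposal is correct and follows essentially the same route as the paper: subadditivity of $d(x,\alpha^n x)$ via the isometry property plus Fekete's Lemma for existence, the triangle inequality for basepoint independence, and then the comparison with $d(x,\alpha x)$ to get $d(\alpha)\leq l(\alpha)$. The only cosmetic difference is that you derive the last bound by telescoping, whereas the paper reads it off from Fekete's infimum characterisation $d(\alpha)=\inf_{n\geq 1} d(x,\alpha^n x)/n$; these are the same estimate.
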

\begin{proof}
Fix $x\in X$. The triangle inequality implies that $\left(d(x,\alpha^nx)\right)$ is sub-additive. Fekete's Lemma then says that the limit of $d(x,\alpha^nx)/n$ as $n\to \infty$ exists and is given by the infimum:
\[
  \lim_{n\to \infty} \frac{d(x,\alpha^nx)}{n} = \inf_{n\geq 1}  \frac{d(x,\alpha^nx)} {n}.
\]
For any $y\in X$ and $n\in \N$ we have, again by the triangle inequality,
\[
d(x,\alpha^n x) \leq d(x,y) + d(y,\alpha^n y) + d(\alpha^n y ,\alpha^n x) = d(y,\alpha^n y) + 2d(x,y).
\]
Hence the eventual displacement $d(\alpha)$ is independent of $x\in X$. Therefore
\[
d(\alpha) = \inf_{x\in X} \inf_{n\geq 1} \frac{d(x,\alpha^nx)}{n} \leq \inf_{x\in X}  d(x,\alpha x) = l(\alpha)
\]
as claimed.
\end{proof}
It follows directly from the definition and the triangle inequality that if $\alpha$ is an invertible isometry then $d(\alpha)=d(\alpha^{-1})$, and if $\alpha$ and $\beta$ are commuting  isometries then $d(\alpha\beta) \leq d(\alpha)+d(\beta)$.
\begin{definition}
An isometry $\alpha$ of a metric space $(X,d)$ is 
\begin{enumerate}
\item \defn{elliptic} if $\alpha$ has a fixed point, \ie there is $x\in X$ with $d(x,\alpha x)=l(\alpha)=0$
\item \defn{hyperbolic} if there is $x\in X$ with  $d(x,\alpha x)=l(\alpha)>0$;
\item\defn{parabolic} if there is no $x\in X$ with  $d(x,\alpha x)=l(\alpha)$.
\end{enumerate}
\end{definition}
\begin{definition}
The action of the cyclic group $\langle \alpha\rangle$ generated by an invertible isometry $\alpha$ of a metric space $(X,d)$ is \defn{free and proper} if
\begin{enumerate}
\item each $x\in X$ has an open neighbourhood $U$ with $U\cap \alpha^n(U) \neq \emptyset \iff n=0$;
\item for each $x$  and $x'$ in distinct orbits there are open neighbourhoods $U\ni x$ and $U'\ni x'$ such that $U \cap \alpha^n(U')=\emptyset$ for all $n\in \Z$.
\end{enumerate}
\end{definition}
\begin{lemma}
\label{free and proper}
Suppose $\alpha$ is an invertible isometry with $d(\alpha)>0$. Then the action of the cyclic group $\langle \alpha \rangle$ on $(X,d)$ is free and proper. 
\end{lemma}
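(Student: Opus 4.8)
The plan is to reduce both conditions to a single linear lower bound on displacement. First I would record that, from the proof of Lemma~\ref{eventual displacement lemma}, for any isometry $\beta$ and any $x\in X$ one has $d(\beta) = \inf_{n\geq 1} d(x,\beta^n x)/n$, so that $d(x,\beta^n x) \geq n\,d(\beta)$ for all $n\geq 1$. Applying this to $\beta=\alpha$ and, for negative powers, to $\beta=\alpha^{-1}$ together with the identity $d(\alpha^{-1})=d(\alpha)$ noted before the statement, I would obtain
\[
d(x,\alpha^m x) \;\geq\; |m|\,d(\alpha) \qquad \text{for all } x\in X,\ m\in\Z\setminus\{0\}.
\]
Since $d(\alpha)>0$ by hypothesis, every non-trivial power of $\alpha$ moves every point by a definite and unboundedly growing amount; this is essentially the only property of $\alpha$ I will use.

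For condition (1), I would fix $x$ and take $U$ to be the open ball of radius $d(\alpha)/2$ about $x$. If some $n\neq 0$ gave $U\cap\alpha^n(U)\neq\emptyset$, then choosing $u,u'\in U$ with $u=\alpha^n u'$ and using that $\alpha^n$ is an isometry yields
\[
d(x,\alpha^n x) \;\leq\; d(x,u) + d(\alpha^n u',\alpha^n x) \;=\; d(x,u)+d(u',x) \;<\; d(\alpha),
\]
contradicting $d(x,\alpha^n x)\geq |n|\,d(\alpha)\geq d(\alpha)$. Hence $U\cap\alpha^n(U)=\emptyset$ for all $n\neq 0$, while $U\cap\alpha^0(U)=U\neq\emptyset$, which is exactly condition (1).

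For condition (2), I would take $x,x'$ in distinct orbits and control the distances from $x$ to the orbit of $x'$. The triangle inequality and the displacement bound give $d(x,\alpha^n x')\geq d(x',\alpha^n x')-d(x,x')\geq |n|\,d(\alpha)-d(x,x')$, so $d(x,\alpha^n x')\to\infty$ as $|n|\to\infty$. Thus only finitely many integers $n$ satisfy $d(x,\alpha^n x')\leq 1$, and as $x$ lies in no $\alpha$-orbit of $x'$ each of these finitely many distances is strictly positive; hence $\rho:=\inf_{n\in\Z} d(x,\alpha^n x')$ is a minimum over a finite set of positive numbers and is therefore positive. Taking $U$ and $U'$ to be the open balls of radius $\rho/2$ about $x$ and $x'$, the same isometry estimate shows that $U\cap\alpha^n(U')\neq\emptyset$ would force $d(x,\alpha^n x')<\rho$, a contradiction; so $U\cap\alpha^n(U')=\emptyset$ for every $n\in\Z$, establishing condition (2).

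The one step requiring care is the positivity of $\rho$ in condition (2): this is precisely where properness, as opposed to mere freeness, exploits the \emph{growth} of the displacement rather than just its positivity, guaranteeing that the distances from $x$ to the orbit of $x'$ cannot accumulate at zero. The remaining estimates are routine applications of the isometry property together with the linear lower bound inherited from Lemma~\ref{eventual displacement lemma}.
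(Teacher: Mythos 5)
Your proof is correct and follows essentially the same route as the paper's: both rest on the linear lower bound $d(x,\alpha^m x)\geq |m|\,d(\alpha)$ coming from subadditivity (Fekete's Lemma in the proof of Lemma~\ref{eventual displacement lemma}), use balls of radius half the relevant separation constant, and for properness establish that $\inf_{n\in\Z} d(x,\alpha^n x')>0$ when $x,x'$ lie in distinct orbits. The only cosmetic differences are that you invoke the infimum characterisation of $d(\alpha)$ directly where the paper re-derives it by contradiction, and you get positivity of the infimum from the growth $d(x,\alpha^n x')\to\infty$ where the paper observes that at most one point of the orbit of $x'$ can lie in a small ball around $x$.
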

\begin{proof}
Fix $0<\epsilon <d(\alpha)/2$. We claim that $B_\epsilon(x)\cap \alpha^m\left( B_\epsilon(x)\right)=\emptyset$ for $m\neq 0$. Since $\alpha$ is an invertible isometry $\alpha^m\left( B_\epsilon(x)\right) = B_\epsilon(\alpha^mx)$ so it suffices to show that $d(x,\alpha^mx)\geq 2\epsilon$ for all $m\neq 0$. Suppose for a contradiction that there is some $x\in X$ and $0\neq m\in \N$ with $d(x,\alpha^m x)<2\epsilon$. Then by the triangle inequality
\[
d(x,\alpha^n x) \leq K + \left\lfloor \frac{n}{m} \right\rfloor 2\epsilon
\]
for any $n\in \N$ where $K = \max\{ d(x,\alpha^k x) \colon 0\leq k \leq m\}$. Hence
\[
d(\alpha) = \lim_{n\to \infty} \frac{d(x,\alpha^n x)}{n} \leq \frac{2\epsilon}{m} \leq 2\epsilon
\]
contradicting our choice of $\epsilon$. 

Now suppose that $x'$ is not in the orbit of $x$. Then by the above there is at most one $n\in \Z$ for which $\alpha^nx'\in B_\epsilon(x)$. Hence $\inf_{n\in \Z} d(x,\alpha^nx')=2\delta>0$ and $B_\delta(x) \cap \alpha^n\left( B_\delta(x') \right) =\emptyset$ for all $n\in \Z$.
\end{proof}

\subsection{Auto-equivalences acting on $\astab{\Lambda}{\cat{D}}$} 
We relate the mass growth of an exact auto-equivalence to the eventual displacement of the induced isometry of the space of stability conditions. Throughout we assume that $\alpha$ is in the subgroup $\auto{\Lambda}{\cat{D}} \subset \aut{\Lambda}{\cat{D}}$ of auto-equivalences which preserve a specified component $\astabo{\Lambda}{\cat{D}}$ of the stability space.

\begin{proposition}
\label{metric bounds}
Suppose $\alpha \in \auto{\Lambda}{\cat{D}}$ and $\sigma \in \astabo{\Lambda}{\cat{D}}$. Then 
\begin{equation}
\label{growth inequality}
\max\left\{ h_{\sigma,0}(\alpha) , \left| \phi^\pm_\sigma(\alpha)\right| \right\} \leq d(\alpha)
\end{equation}
with equality when $\astabo{\Lambda}{\cat{D}}$ contains an algebraic stability condition.
\end{proposition}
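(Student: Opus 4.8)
The plan is to prove the two inequalities separately, the bound (\ref{growth inequality}) being the routine half and the equality the substantial one. Since $\alpha$ acts isometrically, $d(\sigma,\alpha^n\sigma)=d(\sigma,\alpha^{-n}\sigma)$, and the transformation rules $m_{\alpha^{-n}\sigma}(E)=m_\sigma(\alpha^nE)$ and $\phi^\pm_{\alpha^{-n}\sigma}(E)=\phi^\pm_\sigma(\alpha^nE)$ turn the defining formula for the Bridgeland metric into
\[
d(\sigma,\alpha^{-n}\sigma)=\sup_{0\neq E}\max\left\{\left|\log\frac{m_\sigma(E)}{m_\sigma(\alpha^nE)}\right|,\ |\phi^+_\sigma(E)-\phi^+_\sigma(\alpha^nE)|,\ |\phi^-_\sigma(E)-\phi^-_\sigma(\alpha^nE)|\right\}.
\]
Keeping only one term and one sign at a time, dividing by $n$, and using that $d(\alpha)=\lim_n d(\sigma,\alpha^{-n}\sigma)/n$ exists (Lemma \ref{eventual displacement lemma}): the mass term gives $d(\alpha)\ge \limsup_n \tfrac1n\log m_\sigma(\alpha^nE)$ for each $E$, hence $d(\alpha)\ge h_{\sigma,0}(\alpha)$ after taking the supremum over $E$; the phase terms give $d(\alpha)\ge\phi^+_\sigma(\alpha)$ and $d(\alpha)\ge-\phi^-_\sigma(\alpha)$ in the same way. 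Combining these with the elementary inequality $\phi^-_\sigma(\alpha)\le\phi^+_\sigma(\alpha)$ yields $|\phi^\pm_\sigma(\alpha)|\le d(\alpha)$, which is (\ref{growth inequality}).

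For the reverse inequality under the algebraic hypothesis I first reduce to a convenient base point. As $h_{\sigma,0}(\alpha)$ and $\phi^\pm_\sigma(\alpha)$ depend only on the component (Proposition \ref{def inv} and Remark \ref{asymptotics}), while $d(\alpha)$ is independent of the base point (Lemma \ref{eventual displacement lemma}), I may take $\sigma$ to be the algebraic stability condition, normalised via the right $\C$-action so that its heart $\mathcal{A}=P(0,1]$ sits entirely in phase $1$ and each simple has charge $-1$. With the split generator $G=\bigoplus_i S_i[-1]$ of Lemma \ref{algebraic generator}, all $\sigma$-phases are integers, $m_\sigma$ counts composition length, and the limits $\phi^\pm_\sigma(\alpha)=\lim_n \phi^\pm_\sigma(\alpha^nG)/n$ exist. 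Since the metric may be computed as a supremum over $\sigma$-semistable objects of phase in $(0,1]$ --- here exactly the nonzero objects of $\mathcal{A}$ --- it suffices to bound the three terms of $d(\sigma,\alpha^{-n}\sigma)$ \emph{uniformly} over $E\in\mathcal{A}$.

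Two of the three terms behave well. Each $E\in\mathcal{A}$ lies in $\spclos{G[1]}$, so $\alpha^nE\in\spclos{\alpha^nG[1]}$, and comparing the maximal and minimal phases of an extension with those of its constituents gives $\phi^\pm_\sigma(\alpha^nE)-1\in[\phi^-_\sigma(\alpha^nG),\phi^+_\sigma(\alpha^nG)]$; hence both phase terms are at most $\max\{|\phi^+_\sigma(\alpha^nG)|,|\phi^-_\sigma(\alpha^nG)|\}$, of growth rate $|\phi^\pm_\sigma(\alpha)|$. For the mass, Lemmas \ref{complexity inequalities} and \ref{mass inequalities} together with the bound $\delta_0(G,Y)\le m_\sigma(Y)$ valid for every object $Y$ (each $\sigma$-HN factor is a shift of an object of $\mathcal{A}$ of complexity at most its length) give $m_\sigma(\alpha^nE)\le m_\sigma(\alpha^nG)\,\delta_0(G,E)\le m_\sigma(\alpha^nG)\,m_\sigma(E)$, so the mass-\emph{increase} term has rate at most $h_{\sigma,0}(\alpha)$.

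The main obstacle is the mass-\emph{decrease} term $\log\!\big(m_\sigma(E)/m_\sigma(\alpha^nE)\big)$, which must be controlled uniformly over objects of arbitrarily large length. Running the same estimate for $\alpha^{-1}$ applied to $\alpha^nE$ (using that $\alpha^{-n}$ restricts to a length-preserving equivalence $\alpha^n\mathcal{A}\to\mathcal{A}$) bounds it by $\log m_\sigma(\alpha^{-n}G)$, of rate $h_{\sigma,0}(\alpha^{-1})$. Thus the three terms together yield
\[
d(\alpha)\le \max\big\{\,h_{\sigma,0}(\alpha),\ h_{\sigma,0}(\alpha^{-1}),\ |\phi^+_\sigma(\alpha)|,\ |\phi^-_\sigma(\alpha)|\,\big\},
\]
and the crux is to show that the unwanted term $h_{\sigma,0}(\alpha^{-1})$ does not exceed the right-hand side of (\ref{growth inequality}). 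This is exactly where the algebraic hypothesis is essential: there $h_{\sigma,0}$ coincides with the genuine entropy $h_0$ (Theorem \ref{mass growth bounds entropy}), and one must exploit this identification --- via the piecewise-linear structure of Proposition \ref{basic bounds}, so that the shifting-number terms absorb the inverse contribution --- to force the matching upper bound $d(\alpha)\le\max\{h_{\sigma,0}(\alpha),|\phi^\pm_\sigma(\alpha)|\}$ and hence equality. I expect this domination of $h_{\sigma,0}(\alpha^{-1})$ to be the delicate step; the phase and mass-increase estimates above are comparatively mechanical.
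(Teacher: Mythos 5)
Your first half is correct and is essentially the paper's own argument (run at $\alpha^{-n}$ rather than $\alpha^{n}$), and your three estimates in the algebraic case --- phases via the split generator, mass-increase via complexity, mass-decrease via $m_\sigma(\alpha^{-n}G)$ --- are individually valid. But the step you defer is a genuine gap, not a routine verification, and the mechanism you propose cannot supply it. You are left needing $h_{\sigma,0}(\alpha^{-1})\leq\max\{h_{\sigma,0}(\alpha),|\phi^\pm_\sigma(\alpha)|\}$; however, Proposition \ref{basic bounds} and Theorem \ref{mass growth bounds entropy} constrain $h_{\sigma,t}(\alpha)$ only in terms of data attached to $\alpha$ itself, namely $h_{\sigma,0}(\alpha)$ and $\phi^\pm_\sigma(\alpha)$, and nothing available under the standing hypotheses relates the growth of $\alpha$ to that of $\alpha^{-1}$ (the symmetry $h_0(\alpha)=h_0(\alpha^{-1})$ quoted in \S 2 needs saturation and a Serre functor, neither of which is assumed). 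Worse, given your other estimates and the already-proved lower bounds applied to both $\alpha$ and $\alpha^{-1}$, the inequality you defer is \emph{equivalent} to the equality being proved: the domination of $h_{\sigma,0}(\alpha^{-1})$ is a corollary of Proposition \ref{metric bounds}, not an ingredient one can assume beforehand. As a plan it is therefore circular.

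The paper's proof is organised precisely so that no comparison between $\alpha$- and $\alpha^{-1}$-quantities is ever needed, and this is the device you are missing. Working with $d(\sigma,\alpha^n\sigma)$ over $\sigma$-semistable objects, it normalises the algebraic stability condition so that every simple has charge $i$; then every object $E$ of the heart has exactly $m_\sigma(E)$ simple Jordan--H\"older factors, and \emph{all three} metric terms --- both directions of the mass ratio and both phase differences --- are bounded by the corresponding quantity evaluated at the finitely many simples $S$. The finite maximum over $S$ commutes with $\limsup_n$, and re-majorising $\max_S\limsup_n(\cdots)$ by $\sup_E\limsup_n(\cdots)$ identifies everything with $h_{\sigma,0}(\alpha^{-1})$ and $|\phi^\pm_\sigma(\alpha^{-1})|$. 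The outcome is a \emph{one-sided} bound, $d(\alpha)\leq\max\{h_{\sigma,0}(\alpha^{-1}),|\phi^\pm_\sigma(\alpha^{-1})|\}$, in which every term belongs to the single functor $\alpha^{-1}$; combined with the reverse inequality this gives $d(\alpha)=\max\{h_{\sigma,0}(\alpha^{-1}),|\phi^\pm_\sigma(\alpha^{-1})|\}$, and the stated equality follows by substituting $\alpha^{-1}$ for $\alpha$ and using $d(\alpha)=d(\alpha^{-1})$ (noted right after Lemma \ref{eventual displacement lemma}). To repair your write-up, replace your mass-decrease estimate --- whose detour through $\delta_0(G,\alpha^nE)$ and $m_\sigma(\alpha^{-n}G)$ is exactly what drags the inverse functor into the bound --- by the Jordan--H\"older estimate over the simples, so that all of your terms carry $\alpha$-quantities, and then close with this $\alpha\leftrightarrow\alpha^{-1}$ symmetrisation of the eventual displacement instead of attempting to dominate $h_{\sigma,0}(\alpha^{-1})$ directly.
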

\begin{proof}
We estimate as follows, where the suprema are taken over $E \in P_\sigma(\phi)$ with $0<\phi\leq 1$:
\begin{align*}
d(\alpha) & = \lim_{n\to \infty} \sup_E \left\{ \left| \frac{1}{n} \log \frac{m_{\alpha^n\sigma}(E)}{m_\sigma(E)}\right|,\ \frac{ | \phi^\pm_{\alpha^n\sigma}(E) - \phi^\pm_{\sigma}(E) |}{n}   \right\} \\
& \geq \sup_E \left\{   \limsup_{n\to \infty}  \left| \frac{1}{n} \log \frac{m_{\alpha^n\sigma}(E)}{m_\sigma(E)}\right|,\    \limsup_{n\to \infty} \frac{ | \phi^\pm_{\alpha^n\sigma}(E) - \phi^\pm_{\sigma}(E) | }{n}   \right\} \\
& =  \max \left\{  \sup_E \limsup_{n\to \infty}  \left| \frac{1}{n} \log m_{\alpha^n\sigma}(E) \right|,\   \sup_E \limsup_{n\to \infty} \frac{ | \phi^\pm_{\alpha^n\sigma}(E) |}{n}   \right\} \\
& = \max \left\{ h_{\sigma,0}(\alpha^{-1}) ,\ \left| \phi^\pm_\sigma(\alpha^{-1}) \right| \right\}.
\end{align*}
The final step uses the fact that $m_{\alpha^n\sigma,0}(E)=m_{\sigma,0}(\alpha^{-n}E) \geq m_\sigma$  to remove the modulus signs on the first term, and similarly that $\phi^\pm_{\alpha^n\sigma}(E) = \phi^\pm_{\sigma}(\alpha^{-n}E)$ to obtain the second. Since $d(\alpha)=d(\alpha^{-1})$ we obtain (\ref{growth inequality}).

Now suppose that there is an algebraic stability condition in the component $\astabo{\Lambda}{\cat{D}}$ of $\sigma$. In fact, since $d(\alpha)$, $h_{\sigma,0}(\alpha)$ and $\phi_\sigma^\pm(\alpha)$ are all independent of the choice of $\sigma$ in the component, we may assume that $\sigma$ is algebraic, and even that $Z_\sigma(S)=i$ for each of the simple objects $S$ of the heart $P(0,1]$. Since each $E$ in the heart has a finite length Jordan--H\"older filtration with simple factors
\begin{align*}
\lim_{n\to \infty} \sup_E \frac{ | \phi^\pm_{\alpha^n\sigma}(E) - \phi^\pm_{\sigma}(E) | }{n} 
&\leq \limsup_{n\to \infty} \max_S \frac{ | \phi^\pm_{\alpha^n\sigma}(S) - \phi^\pm_{\sigma}(S) | }{n} \\
& =   \max_S \limsup_{n\to \infty} \frac{ | \phi^\pm_{\alpha^n\sigma}(S) - \phi^\pm_{\sigma}(S) | }{n} \\
& \leq \sup_E \limsup_{n\to \infty} \frac{ | \phi^\pm_{\alpha^n\sigma}(E) - \phi^\pm_{\sigma}(E) | }{n} = | \phi_\sigma^\pm(\alpha^{-1})|
\end{align*}
where the suprema are taken over semistable $E$ in the heart and the maxima over simple $S$ in the heart. Similarly, using the `triangle inequality'  for mass (Lemma \ref{mass inequalities}) and the fact that the choice of $\sigma$ means $E$ has precisely $m_\sigma(E)$ simple factors, we have
\begin{align*}
\lim_{n\to \infty} \sup_E \left| \frac{1}{n} \log \frac{m_{\alpha^n\sigma}(E)}{m_\sigma(E)} \right| 
& \leq \limsup_{n\to\infty} \max_S  \left| \frac{1}{n} \log \frac{m_{\sigma}(E) m_{\alpha^n\sigma}(S) }{m_\sigma(E)} \right| \\
& = \max_S \limsup_{n\to \infty} \left| \frac{1}{n} \log m_{\alpha^n\sigma}(S) \right| \\
& \leq   \sup_E \limsup_{n\to \infty} \left| \frac{1}{n} \log m_{\alpha^n\sigma}(E) \right| = h_{\sigma,0}(\alpha^{-1}).
\end{align*}
Thus $d(\alpha)\leq \max \{ h_{\sigma,0}(\alpha^{-1}), |\phi_\sigma^\pm(\alpha^{-1}) | \}$ and we have equality in (\ref{growth inequality}).
\end{proof}

\begin{corollary}
\label{fixed and free}
Suppose $\alpha\in \auto{\Lambda}{\cat{D}}$. Then
\begin{enumerate}
\item $h_{\sigma,t}(\alpha)=0$ if there is $ \tau \in \astabo{\Lambda}{\cat{D}}$ with bounded orbit under $\langle \alpha\rangle$;
\item the cyclic action of $\alpha$ is free and proper if $h_{\sigma,t}(\alpha)\neq 0$ for some $t\in \R$, and the quotient $\astabo{\Lambda}{\cat{D}} \to \langle \alpha \rangle \backslash \astabo{\Lambda}{\cat{D}}$ is a holomorphic covering of complex manifolds.
\end{enumerate}
\end{corollary}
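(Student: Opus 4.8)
The plan is to reduce both parts to Proposition~\ref{metric bounds} and the piecewise-linear description of Proposition~\ref{basic bounds}, and then to invoke Lemma~\ref{free and proper}. Throughout I would use that an auto-equivalence is never object-wise nilpotent (if $\alpha^nE=0$ then $E=0$ by invertibility), so that Proposition~\ref{basic bounds} applies and $h_{\sigma,0}(\alpha)\geq 0$, and that $h_{\sigma,0}(\alpha)$ and $\phi^\pm_\sigma(\alpha)$ depend only on the component by deformation invariance (Proposition~\ref{def inv}).

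For part~(1), a bounded orbit of $\tau$ means $d(\tau,\alpha^n\tau)$ is bounded in $n$, so $d(\alpha)=\lim_{n\to\infty} d(\tau,\alpha^n\tau)/n=0$. Proposition~\ref{metric bounds} then forces $\phi^\pm_\sigma(\alpha)=0$ and, together with $h_{\sigma,0}(\alpha)\geq 0$, also $h_{\sigma,0}(\alpha)=0$. Since $h_{\sigma,0}(\alpha)=0$, Proposition~\ref{basic bounds} makes $h_{\sigma,t}(\alpha)$ piecewise-linear with both slopes $\phi^\pm_\sigma(\alpha)=0$, whence $h_{\sigma,t}(\alpha)\equiv 0$.

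For the free-and-proper claim of part~(2) I would prove the key implication by contraposition. If $h_{\sigma,0}(\alpha)=0$ and $\phi^\pm_\sigma(\alpha)=0$, then exactly as above Proposition~\ref{basic bounds} gives $h_{\sigma,t}(\alpha)\equiv 0$. Hence if $h_{\sigma,t_0}(\alpha)\neq 0$ for some $t_0$, at least one of $h_{\sigma,0}(\alpha)$, $\phi^+_\sigma(\alpha)$, $\phi^-_\sigma(\alpha)$ is non-zero, so the left-hand side of~(\ref{growth inequality}) is strictly positive and therefore $d(\alpha)>0$. Since $\alpha$ acts as an invertible isometry of the Bridgeland metric on $\astabo{\Lambda}{\cat{D}}$, Lemma~\ref{free and proper} yields that the cyclic action of $\langle\alpha\rangle$ is free and proper; the estimate $d(x,\alpha^nx)\geq 2\epsilon>0$ for $n\neq 0$ produced there also shows $\langle\alpha\rangle\cong\Z$.

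Finally I would promote this to a holomorphic covering. The left action of $\aut{\Lambda}{\cat{D}}$ is by biholomorphisms: in the local coordinates supplied by the local isomorphism $(P,Z)\mapsto Z$ into $\mor{\Lambda}{\C}$, the auto-equivalence acts through $Z\mapsto Z\circ[\alpha]^{-1}$, which is $\C$-linear and hence holomorphic. A free and proper action of the discrete group $\langle\alpha\rangle\cong\Z$ by biholomorphisms of a complex manifold is properly discontinuous with Hausdorff quotient, so the holomorphic charts descend and make the projection $\astabo{\Lambda}{\cat{D}}\to\langle\alpha\rangle\backslash\astabo{\Lambda}{\cat{D}}$ a holomorphic covering by the standard quotient-manifold argument. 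I expect no serious obstacle here; the one step carrying the real content is the equivalence between non-vanishing of $h_{\sigma,t}(\alpha)$ and strict positivity of $d(\alpha)$, which rests squarely on the piecewise-linearity forced by $h_{\sigma,0}(\alpha)=0$ in Proposition~\ref{basic bounds}.
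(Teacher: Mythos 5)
Your proposal is correct and follows essentially the same route as the paper: bounded orbit $\Rightarrow d(\alpha)=0 \Rightarrow h_{\sigma,t}(\alpha)=0$, and $h_{\sigma,t}(\alpha)\neq 0 \Rightarrow d(\alpha)>0 \Rightarrow$ free and proper via Proposition~\ref{metric bounds} and Lemma~\ref{free and proper}, with the quotient then inheriting the structure of a holomorphic covering. The only difference is that you spell out the step the paper leaves implicit --- using the piecewise-linear bounds of Proposition~\ref{basic bounds} (together with $h_{\sigma,0}(\alpha)\geq 0$ for a non-object-wise-nilpotent functor) to pass between vanishing of $h_{\sigma,t}(\alpha)$ for all $t$ and vanishing of $h_{\sigma,0}(\alpha)$ and $\phi^\pm_\sigma(\alpha)$ --- which is exactly how the paper intends the corollary to be read.
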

\begin{proof}
For the first part, the existence of a bounded orbit means that $d(\alpha)=0$ and hence that $h_{\sigma,t}(\alpha)=0$ by Proposition \ref{metric bounds}. For the second part, $h_{\sigma,t}(\alpha)\neq 0$ implies $d(\alpha)>0$, again by Proposition \ref{metric bounds}, and therefore the action is free and proper by Lemma \ref{free and proper}. It follows that the quotient inherits a unique complex manifold structure such that the quotient map is a holomorphic covering.
\end{proof}

\begin{remark}
The eventual displacement  bounds the modulus of eigenvalues of the action on charges. If $\sigma=(P,Z) \in \astabo{\Lambda}{\cat{D}}$ and $\alpha\in \auto{\Lambda}{\cat{D}}$ satisfies $\alpha  Z = aZ$ for some $a\in \C$ then, taking the supremum over semistable $E$, 
\begin{align*}
d(\alpha) = \lim_{n\to \infty}\frac{d(\sigma,\alpha^n\sigma)}{n} 
& \geq \lim_{n\to\infty} \frac{1}{n} \sup_{E} \left| \log \left( \frac{m_{\alpha^n\sigma}(E)}{m_\sigma(E)}\right)\right| \\
& \geq \lim_{n\to\infty} \frac{1}{n} \sup_{E} \left| \log \left(\frac{|\alpha^n\cdot Z(E)|}{|Z(E)|}\right)\right| \\
&= \left| \log\left( |a|\right)  \right|
\end{align*}
because $m_\tau(E) \geq |W(E)|$ for any stability condition $\tau=(Q,W)$ and object $E$ of $\cat{D}$. In particular when $d(\alpha)=0$, for example when $h_{\sigma,t}(\alpha)=0$ and $\astabo{\Lambda}{\cat{D}}$ contains an algebraic stability condition, only unit complex eigenvalues can occur.  
\end{remark}

\begin{example}[Gepner points]
\label{gepner2}
Suppose $\sigma$ is a Gepner point for $\alpha\in\aut{\Lambda}{\cat{D}}$ with $\alpha\cdot \sigma = \sigma\cdot w$ for some $w\in \C$. Then $h_{\sigma,t}(\alpha)=\Re(w)t$ by Example \ref{gepner1}. Thus the cyclic action of $\alpha$ is free and proper unless $w=0$, in which case  $\sigma$ is a fixed point.
\end{example}
\begin{example}[Pseudo-Anosov auto-equivalences]
\label{pA2}
Suppose $\alpha$ is  pseudo-Anosov with stretch factor $\lambda$. Then
\[
d(\alpha) \geq h_{\sigma,0}(\alpha) = \sup_{E\in \cat{D}} \limsup_{n\to \infty} \log \frac{1}{n} m_\sigma(\alpha^nE) = \log \lambda > 0
\]
by  Proposition \ref{metric bounds}. Therefore $\langle \alpha \rangle$ acts freely and properly on $\astab{\Lambda}{\cat{D}}$.

If $\alpha$ is  DHKK pseudo-Anosov with $\alpha\cdot \sigma = \sigma\cdot g$ for some $\sigma\in \stab{\cat{D}}$ and $g=(M,f)$ in the universal cover of $GL_2^+(\R)$ then computing as in Example \ref{pA1} we obtain  (without the requirement that an algebraic stability condition exists)
\[
d(\alpha) = \max\left\{ \log|r|,|f(0)|\right\} >0
\]
where $|r|>1$ is the stretch factor. As in Example \ref{pA1}, we can always find $\sigma'$ in the orbit $\sigma\cdot \C$ with $d(\alpha) = d(\sigma',\alpha\cdot \sigma')$ so that $l(\alpha)=d(\alpha)$ too. Thus $\alpha$ is a hyperbolic isometry of $\astab{\Lambda}{\cat{D}}$. 
%
%Kikuta shows the analogue for the quotient $\astab{\Lambda}{\cat{D}}/\C$ with respect to the induced metric. Namely,   \cite[Proposition 4.8 and Theorem 4.9]{kikuta2020curvature} state that the induced isometry $\overline{\alpha}$ is a hyperbolic isometry of $\stab{\cat{D}}/\C$ with 
%\[
%\log \rho([\alpha]) = h_{\sigma,0}(\alpha) = d(\overline{\alpha})=l(\overline{\alpha}).
%\]
\end{example}

\begin{example}[Semisimple categories]
\label{emisimple2}
Let $F$ be a finite set and $\cat{D}^b(F)$ be the triangulated category of $F$-indexed bounded complexes over a field $k$ as in Example \ref{semisimple}. The stability space  $\stab{\cat{D}^b(F)} \cong \C^F$ is the universal cover of  the complex torus ${\C^*}^F \subset \mor{K\cat{D}^b(F)}{\C}$. The auto-equivalence $\alpha$ acts via $M_\alpha(1)$ on the charge space, and via the covering action on $\stab{\cat{D}^b(F)}$.

An endomorphism $\alpha$ represented by a matrix $M_\alpha(z) \in M_{F\times F}\left(\N[z,z^{-1}]\right)$ is an auto-equivalence if and only if $M_\alpha(1)$ is a permutation matrix and all non-zero entries of $M_\alpha(z)$ are monic monomials. This is because the only indecomposable objects in $\cat{D}^b(F)$ are the shifts of the simple objects $\{S_i \mid i\in F\}$ of the standard heart. Therefore $M_\alpha(z)^k=\text{Diag}(z^{n_i} \mid i \in F)$  is diagonal where $k\in \N$ is the order of the permutation matrix $M_\alpha(1)$. By Example \ref{semisimple} we then have
\[
h_{\sigma,t}(\alpha) = 
\begin{cases}
\min\{-n_i/k  \mid i\in F\}t & t\leq 0\\
\max\{-n_i /k \mid i\in F\}t & t\geq 0
\end{cases}
\]
so that $d(\alpha) = \max\{ |n_i|/k \mid i\in F\} $ by Proposition \ref{metric bounds}. In particular the cyclic action of $\alpha$ is free and proper unless $M_\alpha(z)^k=\id$ for some $k>0$ (in which case every orbit is periodic with period $k$).

A stability condition $\sigma$ on $\cat{D}^b(F)$ is determined by a choice of phases $\{\phi_i \mid i\in F\}$ and masses for the simple objects $S_i$. In order to minimise $d(\sigma,\alpha\sigma)$ we choose the masses to be the same. Let $\pi$ be the permutation corresponding to $M_\alpha(1)$ so that $\alpha(S_i) = S_{\pi(i)}[-m_i]$ for some $m_i\in \Z$. Note that $n_i = \sum_{j\in \mathcal{O}_i} m_j$ where $\mathcal{O}_i \subset F$ is the orbit of $i$ under $\pi$. The minimal distance between $\sigma$ and $\alpha\sigma$ occurs when we choose the phases so that 
\[
\phi_{\pi(i)}-\phi_i = m_i-\frac{n_i}{\#\mathcal{O}_i}\qquad (i\in F).
\]
This gives a minimal distance $l(\alpha)=d(\sigma,\alpha\sigma)= \max\{ |n_i| / \#\mathcal{O}_i \colon i\in F\}$. So  $l(\alpha) \geq d(\alpha)$ with equality only when $\pi$ is a cycle. We conclude that $\alpha$ is a hyperbolic isometry when $\pi$ is a cycle, and otherwise is parabolic (except when $M_\alpha(z)=\id$ in which case it is elliptic).
\end{example}
\begin{example}[Spherical twists]
\label{twists}
Recall from Examples \ref{entropy examples} that if $\Phi$ is the spherical twist about one of the simple objects in the standard heart of the $N$-Calabi--Yau category $\cat{D}^N(Q)$ associated to  a connected acyclic quiver $Q$ with at least two vertices then for $\sigma$ in the standard component
\[
h_{\sigma,t}(\Phi) = 
\begin{cases}
(1-N)t & t <0\\
0 & t\geq 0.
\end{cases}
\]
Therefore the cyclic action of $\Phi$ on $\stab{\cat{D}^N(Q)}$ is free and proper. Moreover, since the standard component contains algebraic stability conditions $d(\Phi)=N-1$ by Proposition \ref{basic bounds}. Moreover,  if $\Phi$ is the twist about the simple object $S$ then $\Phi(S)=S[N-1]$ which implies that $d(\sigma,\Phi\sigma) \geq N-1$ for $\sigma$ with heart the standard heart of $\cat{D}^N(Q)$. Thus $l(\Phi)=d(\Phi)=N-1$ and  $\Phi$ is a hyperbolic isometry of $\stab{\cat{D}^N(Q)}$.
\end{example}
\begin{example}[Serre functors]
Let $\cat{D}$ be the perfect derived category of dg-modules over a smooth and proper dg-algebra and $S$ the Serre functor. Then it follows from Example \ref{serre dim} and Proposition \ref{metric bounds} that $\max\{ -\underline{\text{Sdim}}(\cat{D}) , \overline{\text{Sdim}}(\cat{D}) \} \leq d(S)$ because $\underline{\text{Sdim}}(\cat{D}) \leq \overline{\text{Sdim}}(\cat{D})$.  Since the lower Serre dimension may be negative, see \cite[Examples 5.7 and 5.8]{Elagin2019ThreeNO}, this cannot be further simplified in general.
\end{example}

\subsection{Auto-equivalences acting on $\astab{\Lambda}{\cat{D}}/\C$} 
Suppose that $\alpha \in \auto{\Lambda}{\cat{D}}$. The action on $\astabo{\Lambda}{\cat{D}}$ descends to an action $\overline{\alpha}\cdot \overline{\sigma} = \overline{\alpha\cdot \sigma}$ on the corresponding component $\astabo{\Lambda}{\cat{D}}/\C$ of the quotient because the right action of $\auto{\Lambda}{\cat{D}}$ commutes with the left $\C$ action. Moreover, $\overline{\alpha}$ is an invertible isometry of the induced metric $d(\overline{\sigma}, \overline{\tau}) = \inf_{a\in \C} d(\sigma, \tau\cdot a)$. Clearly $d(\overline{\alpha})\leq d(\alpha)$ since $d(\overline{\sigma}, \overline{\alpha}\cdot \overline{\sigma}) \leq d(\sigma,\alpha\cdot \sigma)$.
\begin{lemma}
\label{quotient mass bound}
Suppose $\cat{D}$ has a split generator $G$, and $\alpha\in\auto{\Lambda}{\cat{D}}$. Then the eventual displacement for the component $\astabo{\Lambda}{\cat{D}}/\C$ satisfies
\[
d(\overline{\alpha})\geq \frac{h_{\sigma,0}(\alpha) + h_{\sigma,0}(\alpha^{-1})}{2}.
\]
\end{lemma}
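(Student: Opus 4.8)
The plan is to estimate the quotient distance $d(\overline{\sigma}, \overline{\alpha^n\sigma})$ from below using only the mass-ratio part of the Bridgeland metric, and then divide by $n$ and pass to the limit. Recall that $d(\overline{\sigma}, \overline{\alpha^n\sigma}) = \inf_{w\in\C} d(\sigma, (\alpha^n\sigma)\cdot w)$ and that the right $\C$-action rescales every mass uniformly, $m_{(\alpha^n\sigma)\cdot w}(E) = e^{\pi\Im(w)} m_{\alpha^n\sigma}(E)$, while shifting every phase by $-\Re(w)$. Since the metric is a supremum over $E$ of a maximum of three terms, I would discard the two phase terms, keep the mass term, and note that it depends on $w$ only through $u:=\pi\Im(w)$. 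Writing $\mu_n(E) = \log\!\big(m_{\alpha^n\sigma}(E)/m_\sigma(E)\big)$ and using $m_{\alpha^n\sigma}(E) = m_\sigma(\alpha^{-n}E)$, this gives
\[
d(\overline{\sigma}, \overline{\alpha^n\sigma}) \;\geq\; \inf_{u\in\R}\ \sup_{0\neq E\in\cat{D}} \big| \mu_n(E) + u \big|.
\]
The inner problem is a one-dimensional centering: for a bounded set of reals the optimal $u$ is the midpoint of its range, so the right-hand side equals $\tfrac12\big(M_n^+ - M_n^-\big)$, where $M_n^+ = \sup_E \mu_n(E)$ and $M_n^- = \inf_E \mu_n(E)$; both are finite because $|\mu_n(E)| \le d(\sigma,\alpha^n\sigma) < \infty$. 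Substituting $E\mapsto \alpha^n E$ in the expression for $M_n^-$ identifies $-M_n^- = Q_n$, so that, with $P_n := M_n^+$,
\[
P_n = \sup_{E}\big[\log m_\sigma(\alpha^{-n}E) - \log m_\sigma(E)\big], \quad Q_n = \sup_{E}\big[\log m_\sigma(\alpha^{n}E) - \log m_\sigma(E)\big],
\]
and hence $d(\overline{\sigma}, \overline{\alpha^n\sigma}) \ge \tfrac12(P_n + Q_n)$.

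Next I would show that $P_n$ and $Q_n$ are subadditive in $n$. This follows by writing $\log m_\sigma(\alpha^{-(m+n)}E) - \log m_\sigma(E)$ as the sum of the increment for $\alpha^{-m}$ applied to $\alpha^{-n}E$ and the increment for $\alpha^{-n}$ applied to $E$, each bounded by $P_m$ and $P_n$ respectively; taking the supremum over $E$ gives $P_{m+n}\le P_m + P_n$, and likewise for $Q_n$. By Fekete's Lemma the limits $\lim_n P_n/n$ and $\lim_n Q_n/n$ therefore exist. Using the split generator $G$ and the bound $P_n \ge \log m_\sigma(\alpha^{-n}G) - \log m_\sigma(G)$, Theorem~\ref{mass growth bounds entropy} (at $t=0$, applied to $\alpha^{-1}$) yields $\lim_n P_n/n \ge h_{\sigma,0}(\alpha^{-1})$, and symmetrically $\lim_n Q_n/n \ge h_{\sigma,0}(\alpha)$.

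Finally I would combine these. Since the eventual displacement $d(\overline{\alpha}) = \lim_n \tfrac1n d(\overline{\sigma}, \overline{\alpha^n\sigma})$ exists (Lemma~\ref{eventual displacement lemma}), and each of $P_n/n$, $Q_n/n$ converges, we may pass to the limit term by term:
\[
d(\overline{\alpha}) \;\ge\; \lim_{n\to\infty} \frac{P_n + Q_n}{2n} \;=\; \frac12\Big(\lim_n \tfrac{P_n}{n} + \lim_n \tfrac{Q_n}{n}\Big) \;\ge\; \frac{h_{\sigma,0}(\alpha) + h_{\sigma,0}(\alpha^{-1})}{2}.
\]
The main obstacle is precisely this last splitting of the limit of a sum into a sum of limits: working only from the definition of mass growth one controls each quantity by a $\limsup$, and $\limsup(P_n+Q_n)\le \limsup P_n + \limsup Q_n$ points the wrong way. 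Thus the subadditivity argument establishing that both limits exist individually is the essential, non-cosmetic step. One could instead avoid the split generator in the final estimates by using $\limsup_n \sup_E \ge \sup_E \limsup_n$ together with $\tfrac1n\log m_\sigma(E)\to 0$ for fixed $E$, but the convergence of $P_n/n$ and $Q_n/n$ would still be needed to conclude.
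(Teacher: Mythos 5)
Your proof is correct, and its first half coincides with the paper's own argument: restrict the Bridgeland metric to its mass term, note that the right $\C$-action enters this term only through the uniform rescaling $\pi\Im(w)$, solve the one-dimensional centering problem to get half the oscillation $\tfrac12(P_n+Q_n)$, and convert the infimum term into a second supremum via the substitution $E\mapsto \alpha^nE$. Where you genuinely depart from the paper is in the passage to the limit. The paper bounds the two suprema below by their values at the split generator $G$ and then asserts
\[
\limsup_{n\to\infty} \frac{1}{2n}\left( \log \frac{m_{\alpha^n\sigma}(G)}{m_\sigma(G)} + \log \frac{m_{\alpha^{-n}\sigma}(G)}{m_\sigma(G)} \right) = \frac{1}{2}\left( h_{\sigma,0}(\alpha^{-1}) + h_{\sigma,0}(\alpha) \right),
\]
which tacitly splits a $\limsup$ of a sum into a sum of $\limsup$s; since in general one only has $\limsup(x_n+y_n)\le \limsup x_n+\limsup y_n$, and the chain of estimates needs the inequality in the other direction, this equality requires the individual sequences to converge --- something Theorem \ref{mass growth bounds entropy}, which is stated with a $\limsup$, does not by itself provide. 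Your subadditivity argument for $P_n$ and $Q_n$ together with Fekete's Lemma supplies exactly this missing convergence, after which splitting the limit is legitimate, and the bounds $\lim_n P_n/n \ge h_{\sigma,0}(\alpha^{-1})$ and $\lim_n Q_n/n \ge h_{\sigma,0}(\alpha)$ finish the proof. In short, your route is the paper's route made watertight: the Fekete step you flag as essential is indeed not cosmetic, since it justifies the one point in the published chain of equalities that is not valid for arbitrary sequences.
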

\begin{proof}
We estimate using the mass term of the Bridgeland metric as follows:
\begin{align*}
d(\overline{\alpha})
&= \limsup_{n\to \infty} \inf_{w\in \C} \frac{d(\sigma,\alpha^n\sigma\cdot w)}{n} \\
& \geq \limsup_{n\to \infty} \inf_{w\in \C}  \sup_{E\in \cat{D}} \frac{1}{n} \left| \log \frac{m_{\alpha^n\sigma \cdot w}(E)}{m_\sigma(E)}\right|\\
& = \limsup_{n\to \infty} \inf_{w\in \C}  \sup_{E\in \cat{D}} \frac{1}{n} \left| \log \frac{m_{\alpha^n\sigma }(E)}{m_\sigma(E)} +\pi\Im(w)\right|\\
&=\limsup_{n\to \infty}  \frac{1}{2n} \left( \sup_{E\in \cat{D}} \log \frac{m_{\alpha^n\sigma }(E)}{m_\sigma(E)} - \inf_{E\in \cat{D}} \log \frac{m_{\alpha^n\sigma }(E)}{m_\sigma(E)} \right)\\
&=\limsup_{n\to \infty}  \frac{1}{2n} \left( \sup_{E\in \cat{D}} \log \frac{m_{\alpha^n\sigma }(E)}{m_\sigma(E)} + \sup_{E\in \cat{D}} \log \frac{m_\sigma(E)}{m_{\alpha^n\sigma }(E)} \right)\\
&=\limsup_{n\to \infty}  \frac{1}{2n} \left( \sup_{E\in \cat{D}} \log \frac{m_{\alpha^n\sigma }(E)}{m_\sigma(E)} + \sup_{E\in \cat{D}} \log \frac{m_{\alpha^{-n}\sigma}(E)}{m_{\sigma }(E)} \right)\\
&\geq \limsup_{n\to \infty}  \frac{1}{2n} \left( \log \frac{m_{\alpha^n\sigma }(G)}{m_\sigma(G)} + \log \frac{m_{\alpha^{-n}\sigma}(G)}{m_{\sigma }(G)} \right)\\
&=\frac{1}{2} \left( h_{\sigma,0}(\alpha^{-1}) + h_{\sigma,0}(\alpha)\right)
\end{align*}
where the infimum over $w\in \C$ is achieved by setting 
\[
\Im(w) = \frac{1}{2\pi}\left( \sup_{E\in \cat{D}} \log \frac{m_{\alpha^n\sigma }(E)}{m_\sigma(E)} + \inf_{E\in \cat{D}} \log \frac{m_{\alpha^n\sigma }(E)}{m_\sigma(E)} \right) 
\]
at the fourth step.
\end{proof}
\begin{example}[Pseudo-Anosov auto-equivalences]
\label{pA3}
When $\alpha \in \auto{\Lambda}{\cat{D}}$ is  pseudo-Anosov with stretch factor $\lambda$ then $d(\overline{\alpha})\geq \frac{1}{2}\log \lambda>0$ because $h_{\sigma,0}(\alpha)=\log \lambda$ by Example \ref{pA1} and $h_{\sigma,0}(\alpha^{-1})\geq 0$. Thus the cyclic group generated by a pseudo-Anosov auto-equivalence acts freely and properly on the quotient $\astabo{\Lambda}{\cat{D}}/\C$. 

If $\alpha$ is  DHKK  pseudo-Anosov then  $h_{\sigma,0}(\alpha) = \log  \lambda = h_{\sigma,0}(\alpha^{-1})$ by Example \ref{pA1} so that $d(\overline{\alpha})\geq \log \lambda$. This bound is sharp and the induced action on $\astabo{\Lambda}{\cat{D}}/\C$ is hyperbolic  by  \cite[Theorem 4.9]{kikuta2020curvature}.
\end{example}

\begin{proposition}
\label{quotient eventual displacement}
Suppose that the exact auto-equivalence $\alpha$ preserves a component $\astabo{\Lambda}{\cat{D}}$ of the stability space containing an algebraic stability condition. Then 
\[
\max\left \{ \frac{h_{\sigma,0}(\alpha) + h_{\sigma,0}(\alpha^{-1})}{2} , \frac{ \phi^+_\sigma(\alpha) - \phi^-_\sigma(\alpha)}{2} \right\} \leq d(\overline{\alpha})\leq \max\{ h_{\sigma,0}(\alpha) , |\phi_\sigma^\pm(\alpha)|\}.
\]
\end{proposition}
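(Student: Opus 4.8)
The plan is to treat the upper and lower bounds separately. The upper bound is essentially free: by construction of the quotient metric as an infimum over the $\C$-orbit, the representative $w=0$ already realises $d(\sigma,\alpha^n\sigma)$, so $d(\overline{\alpha})\leq d(\alpha)$. Since $\astabo{\Lambda}{\cat{D}}$ contains an algebraic stability condition, Proposition \ref{metric bounds} gives the \emph{equality} $d(\alpha)=\max\{h_{\sigma,0}(\alpha),|\phi^\pm_\sigma(\alpha)|\}$, and the required upper bound follows at once.

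For the lower bound the mass term $\tfrac12(h_{\sigma,0}(\alpha)+h_{\sigma,0}(\alpha^{-1}))$ is precisely Lemma \ref{quotient mass bound}, whose hypothesis is met because the algebraic assumption supplies a split generator $G$ through Lemma \ref{algebraic generator}. The only genuinely new task is the phase term $\tfrac12(\phi^+_\sigma(\alpha)-\phi^-_\sigma(\alpha))$. The key point I would exploit is that the right $\C$-action separates into independent controls of mass and phase: $w$ rescales all masses by $e^{\pi\Im(w)}$ and shifts all phases by $-\Re(w)$, so the phase terms of the Bridgeland metric depend only on $\Re(w)$. Discarding the mass term and evaluating the remaining phase terms at $G$ gives, for every $w$,
\[
d(\sigma,\alpha^n\sigma\cdot w)\geq \max_{\pm}\left|\phi^\pm_\sigma(G)-\phi^\pm_\sigma(\alpha^{-n}G)+\Re(w)\right|,
\]
using $\phi^\pm_{\alpha^n\sigma\cdot w}(G)=\phi^\pm_\sigma(\alpha^{-n}G)-\Re(w)$.

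Writing $u=\phi^+_\sigma(G)-\phi^+_\sigma(\alpha^{-n}G)$ and $v=\phi^-_\sigma(G)-\phi^-_\sigma(\alpha^{-n}G)$, the elementary identity $\inf_{a\in\R}\max\{|u+a|,|v+a|\}=\tfrac12|u-v|$ shows that infimizing over $w$ (equivalently over $a=\Re(w)$, since the bound depends only on $\Re(w)$) cannot reduce it below $\tfrac12|u-v|$. Now $u-v=(\phi^+_\sigma(G)-\phi^-_\sigma(G))-(\phi^+_\sigma(\alpha^{-n}G)-\phi^-_\sigma(\alpha^{-n}G))$, whose first bracket is a fixed constant while the second is non-negative and grows linearly; dividing by $n$ and using the limits $\lim_n\phi^\pm_\sigma(\alpha^{-n}G)/n=\phi^\pm_\sigma(\alpha^{-1})$ from Lemma \ref{algebraic generator} (applicable since an auto-equivalence is never object-wise nilpotent) yields
\[
d(\overline{\alpha})\geq \tfrac12\left(\phi^+_\sigma(\alpha^{-1})-\phi^-_\sigma(\alpha^{-1})\right).
\]
Because $\overline{\alpha}$ is an invertible isometry, $d(\overline{\alpha})=d(\overline{\alpha^{-1}})$, so applying this inequality to $\alpha^{-1}$ replaces the right-hand side by $\tfrac12(\phi^+_\sigma(\alpha)-\phi^-_\sigma(\alpha))$; taking the maximum with the mass term completes the lower bound.

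I expect the phase estimate to be the main obstacle. The point to get right is that the single real shift $\Re(w)$ can only cancel the \emph{average} of the extremal phase displacements but never their \emph{spread}, which is what forces the factor $\tfrac12$; and since the spread naturally produced is that of $\alpha^{-1}$, the symmetrisation $d(\overline{\alpha})=d(\overline{\alpha^{-1}})$ is unavoidable. Verifying that the relevant limits exist and equal $\phi^\pm_\sigma(\alpha^{-1})$ is exactly where the hypothesis of an algebraic stability condition, through Lemma \ref{algebraic generator}, is used.
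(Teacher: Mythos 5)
Your proof is correct and follows essentially the same strategy as the paper: the upper bound via $d(\overline{\alpha})\leq d(\alpha)$ together with Proposition \ref{metric bounds}, the mass term via Lemma \ref{quotient mass bound} (with the split generator supplied by Lemma \ref{algebraic generator}), and the phase term from the observation that the single real shift $\Re(w)$ can cancel only the average, never the spread, of the extremal phase displacements, which forces the factor $\tfrac12$.

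Two differences in execution are worth recording, both in your favour. First, in the phase estimate the paper normalises $\sigma$ to be algebraic with all simples of equal charge so that $G$ is $\sigma$-semistable, and then uses the exchange $\inf\sup\geq\sup\inf$ over semistable objects; you instead evaluate the supremum at $E=G$ directly and note that the constant $\phi^+_\sigma(G)-\phi^-_\sigma(G)$ washes out after dividing by $n$, so no semistability of $G$ (and no renormalisation of $\sigma$) is needed. Second, and more substantively, the paper's computation produces $\phi^\pm_{\alpha^n\sigma}(G)=\phi^\pm_\sigma(\alpha^{-n}G)$, whose limits give the phase spread of $\alpha^{-1}$, yet its final display states the conclusion for $\alpha$ without comment; your explicit symmetrisation via $d(\overline{\alpha})=d(\overline{\alpha^{-1}})$ (legitimate, since $\alpha^{-1}$ also preserves the component) supplies exactly the step needed to pass from the spread of $\alpha^{-1}$ to that of $\alpha$, mirroring what the paper does explicitly with $d(\alpha)=d(\alpha^{-1})$ in the proof of Proposition \ref{metric bounds}.
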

\begin{proof}
The upper bound comes directly from Proposition \ref{metric bounds} since  $d(\overline{\alpha}) \leq d(\alpha)$.

As $\astabo{\Lambda}{\cat{D}}$ contains an algebraic stability condition $\cat{D}$ has a split generator $G$ such that the limits $\lim_{n\to \infty} \phi^\pm_\sigma(\alpha^nG) /n$ exist for any $\sigma\in \astabo{\Lambda}{\cat{D}}$ by Lemma \ref{algebraic generator}. The conclusion is independent of the particular choice of $\sigma$ in the component so we may assume that $G$ is $\sigma$-semistable. Therefore $d(\overline{\alpha}) \geq \left(h_{\sigma,0}(\alpha) + h_{\sigma,0}(\alpha^{-1})\right)\!/2$ by Lemma \ref{quotient mass bound}.

Recalling that $\inf_x \sup_y f(x,y) \geq \sup_y \inf_x f(x,y)$ for any $f\colon X\times Y \to \R$ we estimate
\begin{align*}
d(\overline{\sigma},\overline{\alpha}\cdot \overline{\sigma}) 
&= \inf_{a\in \C} d(\sigma, \alpha\sigma\cdot a)\\
&= \inf_{a\in \C} \sup_E \left\{\left| \log \frac{m_{\alpha\sigma a}(E)}{m_\sigma(E)} \right| , \left| \phi^\pm_{\alpha\sigma a}(E) - \phi_{\sigma}(E)\right| \right\}\\
&\geq  \sup_E \inf_{a\in \C}  \max\left\{\left| \log \frac{m_{\alpha\sigma a}(E)}{m_\sigma(E)} \right| , \left| \phi^\pm_{\alpha\sigma a}(E) - \phi_{\sigma}(E)\right| \right\}\\
&= \sup_E \frac{1}{2}\left(\phi^+_{\alpha\sigma}(E)-\phi^-_{\alpha\sigma}(E) \right) \geq \frac{1}{2}\left(\phi^+_{\alpha\sigma}(G)-\phi^-_{\alpha\sigma}(G) \right)
\end{align*}
where the supremum is taken over all $\sigma$-semistable objects $E$ and we set 
\[
\Im(a) = -\frac{1}{\pi}\log \frac{m_{\alpha\sigma}(E)}{m_\sigma(E)} \quad \text{and} \quad \Re(a) = 2\phi_\sigma(E) - \frac{\phi^+_{\alpha\sigma}(E)+\phi^-_{\alpha\sigma}(E)}{2}
\]
to obtain the infimum before  using the fact that $G$ is semistable. Thus
\[
d(\overline{\alpha}) = \lim_{n\to \infty} \frac{d(\overline{\sigma} , \overline{\alpha}^n\cdot \overline{\sigma})}{n} 
 \geq \lim_{n\to \infty}  \frac{\phi^+_{\alpha^n\sigma}(G)-\phi^-_{\alpha^n\sigma}(G) }{2n} = \frac{\phi^+_\sigma(\alpha) - \phi^-_\sigma(\alpha)}{2}
\]
too.
\end{proof}

\begin{corollary}
\label{action on quotient}
Suppose that the exact auto-equivalence $\alpha$ preserves a component $\astabo{\Lambda}{\cat{D}}$ of the stability space containing an algebraic stability condition. Then $d(\overline{\alpha})>0$ so that the cyclic action of $\overline{\alpha}$ on $\astabo{\Lambda}{\cat{D}}/\C$ is free and proper unless $h_t(\alpha)$ is linear with $h_0(\alpha)=0=h_0(\alpha^{-1})$.
\end{corollary}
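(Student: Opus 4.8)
The plan is to read the corollary off the lower bound of Proposition \ref{quotient eventual displacement} together with Lemma \ref{free and proper}. By Lemma \ref{free and proper} it suffices to prove that $d(\overline{\alpha})>0$ whenever we are \emph{not} in the exceptional case, since then the cyclic action of the invertible isometry $\overline{\alpha}$ of $\astabo{\Lambda}{\cat{D}}/\C$ is automatically free and proper. So the entire content is to determine exactly when the lower bound
\[
\max\left\{\frac{h_{\sigma,0}(\alpha)+h_{\sigma,0}(\alpha^{-1})}{2},\ \frac{\phi^+_\sigma(\alpha)-\phi^-_\sigma(\alpha)}{2}\right\}
\]
supplied by Proposition \ref{quotient eventual displacement} vanishes.

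First I would record the sign conventions that make this a maximum of two non-negative quantities. Since $\alpha$ is an auto-equivalence it is not object-wise nilpotent, so as noted after Proposition \ref{def inv} we have $h_{\sigma,0}(\alpha)\geq 0$ and $h_{\sigma,0}(\alpha^{-1})\geq 0$; and $\phi^+_\sigma(\alpha)\geq\phi^-_\sigma(\alpha)$ by the definition of these constants. Hence both terms in the maximum are $\geq 0$, and the maximum is zero if and only if both terms are zero.

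Next I would argue the contrapositive. Suppose the exceptional condition fails, that is, it is not the case that $h_t(\alpha)$ is linear with $h_0(\alpha)=0=h_0(\alpha^{-1})$. Because the component contains an algebraic stability condition, Theorem \ref{mass growth bounds entropy} gives $h_{\sigma,0}(\alpha)=h_0(\alpha)$ and $h_{\sigma,0}(\alpha^{-1})=h_0(\alpha^{-1})$, while Lemma \ref{algebraic generator} supplies a split generator, so the linearity criterion of Proposition \ref{basic bounds}---namely that $h_{\sigma,t}(\alpha)=h_t(\alpha)$ is linear precisely when $\phi^+_\sigma(\alpha)=\phi^-_\sigma(\alpha)$---is available. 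I would then split into two cases. If $h_0(\alpha)\neq 0$ or $h_0(\alpha^{-1})\neq 0$, the first term is strictly positive. Otherwise $h_0(\alpha)=0=h_0(\alpha^{-1})$, and the failure of the exceptional condition forces $h_t(\alpha)$ to be non-linear, whence $\phi^+_\sigma(\alpha)>\phi^-_\sigma(\alpha)$ and the second term is strictly positive. Either way $d(\overline{\alpha})>0$, and Lemma \ref{free and proper} then yields the claim.

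The argument is essentially bookkeeping of positivity, so I do not anticipate a serious obstacle. The only points requiring care are confirming that the linearity dichotomy of Proposition \ref{basic bounds} genuinely applies here---which is why I would explicitly invoke the algebraic stability condition to produce both a split generator (via Lemma \ref{algebraic generator}) and the identification of mass growth with entropy (via Theorem \ref{mass growth bounds entropy})---and verifying that the two lower-bound terms are non-negative, so that ``maximum equal to zero'' is equivalent to ``both terms equal to zero.'' Once these are in place the case analysis is immediate.
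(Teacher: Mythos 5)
Your proposal is correct and takes essentially the same route as the paper, whose entire proof is a one-line citation of Theorem \ref{mass growth bounds entropy}, Proposition \ref{quotient eventual displacement} and Lemma \ref{free and proper}. Your write-up simply makes explicit the positivity bookkeeping and the appeal to the linearity criterion of Proposition \ref{basic bounds} (via the split generator from Lemma \ref{algebraic generator}) that the paper leaves implicit.
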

\begin{proof}
This follows  from Theorem \ref{mass growth bounds entropy}, Proposition \ref{quotient eventual displacement} and Lemma~\ref{free and proper}.
\end{proof}
\begin{remark}
The bound in Proposition \ref{quotient eventual displacement} is not sharp in general because we take the infimum over the mass and phase terms in the Bridgeland metric separately. Therefore the action of $\alpha$ on $\astabo{\Lambda}{\cat{D}}/\C$  may have strictly positive eventual displacement, and so be free and proper, even when the entropy of $\alpha$ and $\alpha^{-1}$ vanishes. I do not know of any examples of this behaviour. 
\end{remark}
\begin{remark}
If $\phi_\sigma^\pm(\alpha)=0$ then $d(\overline{\alpha}) = h_{\sigma,0}(\alpha) = h_{\sigma,0}(\alpha^{-1})$ since otherwise  the bounds of Proposition \ref{quotient eventual displacement} yield a contradiction when $\alpha$ is replaced with $\alpha^{-1}$.
\end{remark}
\begin{example}[Spherical twists]
\label{twists 2}
Recall from Example \ref{twists} that if $\Phi$ is the spherical twist about one of the simple objects in the standard heart of the $N$-Calabi--Yau category $\cat{D}^N(Q)$ associated to  a connected acyclic quiver $Q$ with at least two vertices then for $\sigma$ in the standard component
\[
h_{\sigma,t}(\Phi) = 
\begin{cases}
(1-N)t & t <0\\
0 & t\geq 0
\end{cases}
\]
and $\Phi$ acts freely and properly as a hyperbolic isometry of $\stab{\cat{D}^N(Q)}$. Since $h_{\sigma,t}(\Phi)$ is not linear the induced action on $\stab{\cat{D}^N(Q)}/\C$ is also free and proper by Corollary \ref{action on quotient}.
\end{example}

 \end{document}